\documentclass{amsart}
\usepackage{amsmath,amssymb}
\usepackage{amsthm}
\usepackage{bm}
\usepackage{graphicx}
\usepackage{epstopdf}
\usepackage{color}
\newtheorem{definition}{Definition}[section]
  \newtheorem{theorem}[definition]{Theorem}
   \newtheorem{lemma}[definition]{Lemma}
  \newtheorem{corollary}[definition]{Corollary}
  \newtheorem{proposition}[definition]{Proposition}

\newtheorem{question}[definition]{Question}

\newcommand{\fix}{{\rm Fix}}

\newcommand{\Z}{{\mathbb Z}}

\begin{document}
\title{Meridional rank and bridge number for a class of links} 

\author{Michel Boileau}\thanks{First author partially supported by ANR
projects 12-BS01-0003-01 and 12-BS01-0004-01}
\address{Aix-Marseille Universit\'e, CNRS, Centrale Marseille, I2M, UMR 7373,
13453 Marseille, France}
\email{michel.boileau@univ-amu.fr}

\author{Yeonhee Jang}
\address{Department of Mathematics, Nara Women's University, Nara, 630-8506 Japan}
\email{yeonheejang@cc.nara-wu.ac.jp}

\author{Richard Weidmann}
\address{Mathematisches Seminar, Christian-Albrechts-Universität zu Kiel, Ludewig-Meyn Str.~4, 24098 Kiel,
Germany}
\email{weidmann@math.uni-kiel.de}

\begin{abstract}
We prove that links with meridional rank 3 whose 2-fold branched covers are graph manifolds are 3-bridge links.
This gives a partial answer to a question by S. Cappell and J. Shaneson
on the relation between the bridge numbers and meridional ranks of links.
To prove this, we also show that the meridional rank of any satellite knot is at least $4$.
\end{abstract}

\maketitle

%%%%%%%%%%%%%%%%%%%%%%%%%%%%%%%%%%%%%%%%%%%%%%%%%%%%%%%%

\section{Introduction}

An {\it $n$-bridge sphere} of a link $L$ in the 3-sphere $S^3$
is a 2-sphere which meets $L$ in $2n$ points
and cuts $(S^3, L)$ into $n$-string trivial tangles. 
Here, an {\it $n$-string trivial tangle} is 
a pair $(B^3, t)$ of the $3$-ball $B^3$ and 
$n$ arcs properly embedded in $B^3$ parallel to the boundary of $B^3$. 
It is known that every link admits an $n$-bridge sphere for some positive integer $n$.
We call a link $L$ an {\it $n$-bridge link} 
if $L$ admits an $n$-bridge sphere and does not admit an ($n-1$)-bridge sphere.
We call $n$ the {\it bridge number} of the link $L$
and denote it by $b(L)$.

If a link admits an $n$-bridge sphere, 
then it is easy to see that $\pi_1(S^3\setminus L)$
can be generated by $n$ meridians, where a meridian is an element of the fundamental group that is represented by a curve that is freely homotopic to a meridian of $L$.
This implies that the minimal number {of meridians needed to generate the group $\pi_1(S^3\setminus L)$
is less than or equal to $b(L)$.
We denote by $w(L)$ the minimal number of meridians of $\pi_1(S^3\setminus L)$ and call it the {\it meridional rank} of $L$. Thus for any link $L$ we have $b(L)\ge w(L)$.

S. Cappell and J. Shaneson \cite[pb 1.11]{Kir}, as well as K. Murasugi, 
have asked whether the converse holds:

\begin{question}\label{question-wirtinger}
Does  the equality $b(L)=w(L)$ hold for any link $L$ in $S^3$?
\end{question}

This is known to be true for (generalized) Montesinos links by \cite{Boi3}, torus links by  \cite{Ros} and for another class of knots (also refered to as generalized Montesinos knots) by \cite{LM}. More recently the equality has been established for a large class of iterated torus knots using knot contact homology \cite{CH}, see also \cite{Co}.
It is a consequence of Dehn's Lemma that $b(L)=1$ if and only if $w(L)=1$. Moreover
in \cite{Boi6} it is proved that $b(L)=2$ if and only if $w(L)=2$.
%We recall their results in Section \ref{sec-pre}.
%We recall the definition of arborescent links in Section \ref{sec-arb} 
%and give an outline of the proof for the following theorem in Section \ref{sec-main}.

The main purpose of this paper is to prove the following theorem.

\begin{theorem}\label{thm-main}
Let $L$ be a link in the $3$-sphere $S^3$, and suppose that the 2-fold branched cover of $S^3$ branched along $L$ is a graph manifold.
If $w(L)=3$, then $b(L)=3$, i.e., $L$ is a $3$-bridge link.
\end{theorem}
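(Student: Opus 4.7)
The plan is to exploit the structural consequence of the hypothesis that $\Sigma_2(L)$ is a graph manifold. By equivariant JSJ theory (in the spirit of Bonahon--Siebenmann), $(S^3, L)$ then inherits a canonical decomposition along a family of essential Conway $2$-spheres and tori, with each resulting tangle piece of Seifert/Montesinos type. In particular, $\pi_1(S^3 \setminus L)$ splits as an iterated amalgamation whose vertex groups come from Montesinos tangle complements and whose edge groups are free or fundamental groups of $4$-punctured spheres. The constraint $w(L)=3$ ought to force this decomposition to be extremely simple.

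The knot case should be disposed of first. The auxiliary result announced in the abstract, that $w(K) \ge 4$ for every satellite knot $K$, rules out $L$ being a satellite under $w(L)=3$, so $\Sigma_2(L)$ has no essential torus; combined with the graph-manifold hypothesis, this forces $\Sigma_2(L)$ to be Seifert fibered and $L$ to be a Montesinos or torus knot, where $b(L)=w(L)$ is already known by \cite{Boi3, Ros, LM}. In the general link case, I would induct on the number of pieces in the decomposition above, with the inductive step being a meridional rank-splitting estimate: if $L = T_1 \cup_S T_2$ is assembled along a Conway sphere $S$, then a minimal meridional generating set of $\pi_1(S^3 \setminus L)$ can, after Nielsen moves, be split into meridional generating sets of each $\pi_1(B_i \setminus T_i)$ whose total size is at least $3$. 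Under $w(L)=3$ this should force one tangle to be rational and the other to be a Montesinos piece of meridional rank at most $3$, so $L$ reduces to a case already settled.

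The main obstacle is the rank-splitting estimate itself. Meridional generators, unlike arbitrary group generators, can be conjugated and multiplied across the gluing $4$-punctured sphere in subtle ways that a direct Grushko/Kurosh bound does not control. The key technical input will therefore have to be a Nielsen-equivalence classification of meridional $3$-tuples in $\pi_1(S^3 \setminus L)$, or equivalently a folding-sequence argument on the dual tree of the Bonahon--Siebenmann decomposition adapted to meridional tuples. The satellite rank bound $w(K)\ge 4$ presumably also re-enters at this step to exclude essential tori from reappearing inside the tangle pieces during the induction, keeping the structure tree small enough to be enumerated explicitly.
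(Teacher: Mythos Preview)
Your plan for knots is essentially right and matches the paper's Corollary~1.7: the satellite bound $w(K)\ge 4$ (Theorem~1.5/Corollary~1.6) together with the composite case (connected sum of two $2$-bridge knots, which you omit but which is easy) reduces to torus or Montesinos knots, where $b=w$ is known.

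The genuine gap is in your treatment of links. The ``rank-splitting estimate'' you need---that a meridional generating triple for $\pi_1(S^3\setminus L)$ can, after Nielsen moves, be distributed across a Conway-sphere amalgam $A\ast_C B$ so that one side becomes rational---is not available, and the folding/Nielsen method you propose is unlikely to produce it. The edge group $C$ here is the fundamental group of a $4$-punctured sphere, a free group of rank~$3$, which is as large as your generating tuple; the Weidmann-type results (as used in the paper's Lemma~4.1) work because the edge groups in the JSJ of the \emph{exterior} are $\mathbb Z$ or $\mathbb Z^2$ and meridians have controlled fixed-point sets in the Bass--Serre tree. Over a Conway sphere none of this control survives: meridians can lie in $C$ itself, and there is no a~priori reason a minimal meridional triple should respect the splitting at all. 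Your induction therefore has no base-to-inductive mechanism.

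The paper avoids this obstacle entirely by changing the arena. From $w(L)=3$ it extracts that the $2$-fold branched cover $M$ has $\pi_1(M)$ generated by two elements $g_1,g_2$ together with an \emph{inversion} $\alpha$ (the automorphism induced by conjugation by a meridian, sending each $g_i$ to $g_i^{-1}$). By \cite{Boi8} a graph manifold admitting an inversion has Heegaard genus~$2$, and Kobayashi's classification \cite{Kob} then gives an explicit finite list of possible JSJ structures for $M$. The proof is a case-by-case analysis of how the covering involution $\tau_L$ can sit relative to each JSJ configuration (using Tollefson's rigidity \cite{Tol} and the analysis of involutions on Seifert pieces from \cite{Jan2}); in each case one either reads off a $3$-bridge presentation of $L$ directly from the quotient, or reaches a contradiction (sometimes via the satellite bound $w\ge 4$). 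No meridional Grushko-type inequality over Conway spheres is ever invoked.
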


The above theorem, together with the result in \cite{Boi6},
implies that $b(L)=3$ if and only if $w(L)=3$ for links whose 2-fold branched covers are graph manifolds. In particular we obtain the following:
%Here, a graph manifold is a 3-manifold obtained by gluing some Seifert fibered spaces, i.e., some manifolds foliated by circles.

\begin{corollary}\label{cor-4bridges}
Let $L \subset S^3$ be a link whose 2-fold branched cover is a graph manifold. If $b(L) = 4$, then $w(L) = 4$.
\end{corollary}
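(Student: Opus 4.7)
The plan is very short because the corollary is an immediate combination of Theorem~\ref{thm-main} with the already established low-rank cases $w(L)=1$ and $w(L)=2$. The inequality $w(L)\le b(L)$ always holds, so from $b(L)=4$ we get $w(L)\in\{1,2,3,4\}$, and the task is simply to exclude the three smaller values.

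First I would invoke Dehn's Lemma, which gives the equivalence $w(L)=1\iff b(L)=1$, to rule out $w(L)=1$. Next I would cite the result from \cite{Boi6} that $w(L)=2\iff b(L)=2$ to rule out $w(L)=2$. Neither of these steps uses the graph-manifold hypothesis; they apply to arbitrary links.

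Finally, the graph-manifold assumption on the 2-fold branched cover of $S^3$ along $L$ is used only to exclude $w(L)=3$: by Theorem~\ref{thm-main}, if $w(L)=3$ and the 2-fold branched cover is a graph manifold, then $b(L)=3$, contradicting $b(L)=4$. Combining these three exclusions with the inequality $w(L)\le b(L)=4$ forces $w(L)=4$, which is the claim.

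There is no serious obstacle: the argument is purely a bookkeeping of what has already been proved. The only point worth emphasizing is that the hypothesis on the branched cover is passed through Theorem~\ref{thm-main} unchanged, since that is the one step where it is genuinely needed; the cases $w(L)\le 2$ are handled unconditionally.
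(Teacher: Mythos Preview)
Your argument is correct and matches the paper's own reasoning: the corollary is stated there as an immediate consequence of Theorem~\ref{thm-main} together with the result of \cite{Boi6} (and implicitly Dehn's Lemma for the case $w(L)=1$). There is nothing to add.
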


We obtain also the following corollary which answers a question posed in \cite[Question 2]{Boi8} positively for graph manifolds.

\begin{corollary}\label{cor-inversion}
For a closed orientable graph manifold $M$, any inversion of $\pi_1(M)$ is hyper-elliptic.
\end{corollary}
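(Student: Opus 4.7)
Let $\iota$ be an inversion of $\pi_1(M)$. The first step is to realize $\iota$ geometrically as an orientation-preserving involution $\tau \colon M \to M$ with non-empty one-dimensional fixed set, by invoking an equivariant geometrization/orbifold theorem for graph manifolds. The quotient $\mathcal O := M/\tau$ is a 3-orbifold whose underlying space $N := |\mathcal O|$ is a closed orientable 3-manifold, and whose singular locus is a link $L \subset N$; the 2-fold cover $M \to N$ is branched along $L$. The statement that $\tau$ is hyper-elliptic is exactly the statement that $N \cong S^3$, so this is the conclusion we aim to reach.

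The second step is to analyse the orbifold fundamental group $G := \pi_1^{\mathrm{orb}}(\mathcal O)$, which fits into a short exact sequence $1 \to \pi_1(M) \to G \to \mathbb Z/2 \to 1$. Fixing a lift $s \in G$ of the generator of $\mathbb Z/2$, the inversion property of $\iota$ translates into the statement that sufficiently many elements of the nontrivial coset $s\,\pi_1(M)$ are of order two: indeed, $sg$ has order two precisely when $\iota(g) = g^{-1}$, and such $g$ are abundant when $\iota$ inverts conjugacy classes in $\pi_1(M)$. Each such order-two element is a conjugate of a meridian of a component of $L$. Starting from a minimal generating set $g_1,\dots,g_k$ of $\pi_1(M)$, one produces in this way a meridional generating set of $G$, so that $G$ is normally generated by meridians of $L$. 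Killing the meridional subgroup gives $\pi_1(N)$, which is therefore trivial.

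The last step is to conclude $N\cong S^3$. Because $\tau$ respects the JSJ decomposition of $M$ (after an equivariant isotopy, by the equivariant JSJ theorem), the quotient $N$ inherits a graph-manifold structure; a simply connected closed orientable graph manifold is $S^3$ by Perelman's geometrization, and the corollary follows. The hard step is clearly the first one: abstract automorphisms of graph-manifold groups need not a priori be realised by homeomorphisms of the correct order, and the realisation with one-dimensional fixed set is precisely what fails for non-graph examples; this is where one must combine the Nielsen-realisation/equivariant orbifold theorem for graph manifolds with the meridional-rank/bridge-structure analysis developed for Theorem \ref{thm-main}, the latter providing the control over the orbifold $\mathcal O$ needed to identify its base with $S^3$.
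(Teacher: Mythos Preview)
Your outline has a genuine gap and differs substantially from the paper's two-line proof, which simply quotes \cite[Theorem~5]{Boi8} for the Seifert fibered case and combines Theorem~\ref{thm-main} with \cite[Proposition~20(3)]{Boi8} for the non-Seifert case; the latter citation already supplies the passage from an inversion of $\pi_1(M)$ to a branched-cover description $M=M_2(L)$, and Theorem~\ref{thm-main} then finishes.

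Your Step~1 is where the argument breaks. Realising the order-two outer automorphism by an honest involution $\tau$ of $M$ is a Nielsen-realisation problem that you do not solve, and you further require that $\tau$ have non-empty fixed set. That second demand is equivalent to the splitting of the extension $1\to\pi_1(M)\to G\to\mathbb Z/2\to 1$; without it your lift $s$ need not satisfy $s^2=1$, so the computation $(sg)^2=\iota(g)g=1$ underlying Step~2 is unjustified. Your final paragraph concedes the difficulty and proposes to repair it with Theorem~\ref{thm-main}, but that is circular: Theorem~\ref{thm-main} takes as hypothesis a link already sitting in $S^3$, hence presupposes the very conclusion $N\cong S^3$ you are after, and in any case if Steps~2--3 worked as written they would already yield $N\cong S^3$ with no appeal to Theorem~\ref{thm-main} at all, leaving its role in your scheme unexplained. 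A smaller imprecision: in this paper an \emph{inversion} is specifically an automorphism sending each member of a two-element generating set to its inverse, not the looser ``inverts conjugacy classes'' notion your Step~2 seems to invoke.
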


We remark that Cappell and Shaneson's question (Question \ref{question-wirtinger})
is related, by taking the $2$-fold branched covering, to the question whether or not 
the Heegaard genus of a 3-manifold equals the rank of its fundamental group.
For the latter question many counter-examples are known, see
\cite{Boi9, Boi10, Boi11, Li,SW,W1}.
Thus there exist manifolds such that the ranks of their fundamental groups 
are smaller than their Heegaard genera.
To the question of Cappell and Shaneson, however, no counter-examples is known to date.

We also remark that if we replace $w(L)$ with the rank of the link group $\pi_1(S^3\setminus L)$
then we can easily find examples where the differences between the two numbers are arbitrarily large.
For example, the rank of the group $\pi_1(S^3\setminus K(p,q))$ of a torus link $K(p,q)$ is 2
while $b(K(p,q))={\rm min}(p,q)$ by \cite{Sch2}.

To prove Theorem \ref{thm-main} we distinguish two cases, namely the case when the link $L$ is arborescent in the sense of Bonahon and Siebenmann \cite{Bon} and the case when $L$ is not arborescent. We will make use of the following theorem, which is interesting in its own right.

\begin{theorem}\label{thm-3meridians} Let $K$ be a prime knot such that $S^3\backslash K$ has a nontrivial JSJ-decomposition and let $m_1,m_2,m_3$ be meridians. Then one of the following holds:
\begin{enumerate} 
\item $\langle m_1,m_2,m_3\rangle$ is free.
\item $\langle m_1,m_2,m_3\rangle$ is conjugate into the subgroup of $\pi_1(S^3\backslash K)$ corresponding to the peripheral piece of $S^3\backslash K$.
\end{enumerate}
\end{theorem}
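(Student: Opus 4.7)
To prove Theorem~\ref{thm-3meridians} I would analyze the action of $H=\langle m_1,m_2,m_3\rangle$ on the Bass--Serre tree $T$ associated with the nontrivial JSJ decomposition of $S^3\setminus K$. Let $v_0\in T$ be the vertex stabilized by the peripheral piece and fix a meridian $m$ in that stabilizer; each meridian may then be written as $m_i=g_i m g_i^{-1}$ and acts elliptically on $T$, fixing at least the vertex $v_i:=g_iv_0$. A preliminary observation is that $\mathrm{Fix}(m_i)=\{v_i\}$, which amounts to saying that no meridian is conjugate into the $\mathbb{Z}^2$-stabilizer of an internal JSJ torus. Indeed, for any such torus $T'$ the image of $\pi_1(T')$ in $H_1(S^3\setminus K)\cong\mathbb{Z}$ is a proper subgroup $k\mathbb{Z}$ with $|k|\geq 2$ (otherwise $T'$ would be isotopic to $\partial N(K)$, contradicting its internality), and meridians have homology class $\pm 1$, so they cannot lie in any such stabilizer.

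Next I dichotomize according to whether $H$ has a global fixed vertex in $T$. If so, this vertex lies in $\mathrm{Fix}(m_1)\cap\mathrm{Fix}(m_2)\cap\mathrm{Fix}(m_3)=\{v_1\}\cap\{v_2\}\cap\{v_3\}$, forcing $v_1=v_2=v_3$. Then all three meridians lie in the single stabilizer $g_1 G_{v_0} g_1^{-1}$, a conjugate of the peripheral piece --- this is conclusion~(2).

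If $H$ has no global fixed vertex, I apply Weidmann's Nielsen method for groups acting on trees to the elliptic triple $(m_1,m_2,m_3)$, performing elementary transformations until a reduced triple $(m_1',m_2',m_3')$ of elliptic elements generating $H$ is obtained. This provides a graph-of-groups decomposition of $H$ over the quotient $T_H/H$ of its minimal invariant subtree. The goal is then to show that the vertex groups of this decomposition are cyclic (each generated by one of the $m_i'$, thanks to the singleton fixed-point sets combined with the reducedness of the triple) and that the edge groups are trivial. Granted these two properties, $H$ is a free product of at most three infinite cyclic groups together with a (possibly trivial) free factor coming from loops of $T_H/H$, and so $H$ is free --- giving conclusion~(1).

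The main obstacle is controlling the vertex and edge groups of this induced splitting. Triviality of edge groups amounts to extending the preliminary observation from individual meridians to arbitrary nontrivial words in the $m_i$, namely that no such word can lie in the stabilizer of an internal JSJ torus; this requires exploiting the acylindricity of the JSJ action of a knot complement. Ensuring cyclic vertex groups depends on showing that after Nielsen reduction the generators are distributed across distinct vertices of $T_H/H$ once a global fixed point has been ruled out, which is where the small number of generators $n=3$ interacts decisively with the tree combinatorics.
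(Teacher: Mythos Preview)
Your dichotomy---either $H=\langle m_1,m_2,m_3\rangle$ has a global fixed vertex or $H$ is free---is not what Weidmann's Nielsen method delivers, and the case you are missing is exactly the heart of the argument. Applied to three elliptics, Theorem~7 of \cite{W} gives the alternative: either $H\cong\langle m_1\rangle*\langle m_2\rangle*\langle m_3\rangle$, or after conjugating the $m_i$ individually one obtains $m_1',m_2',m_3'$ with (say) $m_1',m_2'$ fixing a common vertex. The second alternative does \emph{not} force $m_3'$ to fix that vertex as well, so your hoped-for conclusion that ``the generators are distributed across distinct vertices'' is simply false in this intermediate case; the induced vertex group at that vertex then contains $\langle m_1',m_2'\rangle$ and is not cyclic. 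Your proposal acknowledges this as an obstacle but offers no mechanism to overcome it.

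The paper's proof handles precisely this intermediate case, and its resolution carries the content of the theorem. One first invokes Proposition~\ref{propNE} to see that $U=\langle m_1',m_2'\rangle$, which sits in the peripheral vertex group, is free (the peripheral piece is not a $2$-bridge knot exterior since the JSJ is nontrivial). One then passes from the full JSJ tree to the Bass--Serre tree of the single amalgam $\pi_1(N)*_A\pi_1(\overline M)$ over the outermost JSJ torus; for homological reasons $U$ meets any conjugate of $A$ only in the cyclic subgroup generated by the meridian of $N$, and since this meridian is not the Seifert fiber of $N$, no element of $U$ fixes a vertex at distance greater than~$1$ from the vertex $v$ fixed by $U$. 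A \emph{second} application of Theorem~7 of \cite{W}, now to the pair $(\{m_1',m_2'\},\{m_3\})$, then yields either $m_3\in G_v$ (conclusion~(2)) or $H\cong U*\langle m_3\rangle\cong F_3$ (conclusion~(1)). This two-stage structure, together with the input from Proposition~\ref{propNE}, is what your outline lacks. (A lesser point: your homological argument for $\mathrm{Fix}(m_i)=\{v_i\}$ is not correct as stated, since the image of $\pi_1(T')$ in $H_1$ is generated by the winding number of the pattern, which can be $0$ or $\pm1$ for an essential torus in a prime knot exterior; the conclusion is true but needs a geometric justification.)
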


\begin{corollary}\label{cor-satellite} Let $K$ be a prime knot such that $S^3\backslash K$ has a nontrivial JSJ-decomposition. Then
$w(K)\geq 4$.
\end{corollary}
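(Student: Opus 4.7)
The plan is to argue by contradiction using Theorem \ref{thm-3meridians}. Suppose $w(K) \le 3$, so that $\pi_1(S^3 \setminus K)$ is generated by three meridians $m_1, m_2, m_3$. By Theorem \ref{thm-3meridians}, either (1) $\langle m_1,m_2,m_3\rangle$ is free, or (2) it is conjugate into the subgroup of $\pi_1(S^3 \setminus K)$ carried by the peripheral JSJ piece. Since $\langle m_1,m_2,m_3\rangle = \pi_1(S^3\setminus K)$, I need to exclude both possibilities.

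To rule out case (1), I would use that the hypothesis of a nontrivial JSJ decomposition forces $K$ to be a nontrivial knot, so the boundary torus of a regular neighborhood of $K$ is incompressible in the exterior. Hence the peripheral subgroup $\mathbb{Z}^2 \hookrightarrow \pi_1(S^3 \setminus K)$. A free group contains no rank-$2$ free abelian subgroup, so $\pi_1(S^3\setminus K)$ cannot be free.

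To rule out case (2), I would use Bass--Serre theory applied to the graph of groups decomposition associated with the JSJ splitting. Since the JSJ decomposition is nontrivial, there is at least one JSJ torus, hence at least one other vertex group besides the peripheral one, and the whole fundamental group acts on the corresponding nontrivial Bass--Serre tree without a global fixed vertex. In particular, the peripheral vertex group is a proper subgroup of $\pi_1(S^3\setminus K)$, and any conjugate of a proper subgroup is still proper. Therefore $\pi_1(S^3\setminus K)$ cannot be contained in a conjugate of the peripheral piece subgroup, contradicting case (2).

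Both cases lead to a contradiction, so $w(K) \ge 4$, as desired. The only nontrivial ingredient is Theorem \ref{thm-3meridians}; once that is available, the remaining obstacle is essentially bookkeeping about when a subgroup can be proper versus the entire group, which is handled by the standard incompressibility and Bass--Serre arguments above.
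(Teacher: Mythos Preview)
Your proposal is correct and is exactly the argument the paper has in mind: the paper simply states that Corollary~\ref{cor-satellite} ``is a direct consequence of Theorem~\ref{thm-3meridians}'' without further elaboration, and you have spelled out precisely why---ruling out freeness via the peripheral $\mathbb{Z}^2$ and ruling out containment in the peripheral vertex group via properness of vertex stabilizers in a nontrivial Bass--Serre tree action.
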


\begin{corollary}\label{cor-3meridians} Let $K \subset S^3$ be a knot. If $w(K) \leq 3$, then $K$ is either a hyperbolic knot or a torus knot or a connected sum of two 2-bridge knots.
\end{corollary}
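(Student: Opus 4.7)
The plan is to split on the value of $w(K)\in\{1,2,3\}$: the cases $w(K)\leq 2$ are dispatched by the results cited in the introduction, and for $w(K)=3$ I would further split on whether $K$ is prime or composite, using Corollary \ref{cor-satellite} together with geometrization in the prime case, and additivity of meridional rank under connected sum in the composite case. For $w(K)=1$ Dehn's Lemma forces $K$ to be the unknot (a torus knot), and for $w(K)=2$ the result of \cite{Boi6} mentioned above gives $b(K)=2$, so $K$ is a 2-bridge knot; every 2-bridge knot is either a torus knot $T(2,2n+1)$ or hyperbolic, so the conclusion holds.

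Now assume $w(K)=3$, and first suppose $K$ is prime. By geometrization, the complement $S^3\setminus K$ is either Seifert fibered, hyperbolic, or admits a nontrivial JSJ decomposition. In the Seifert case $K$ is a torus knot, in the hyperbolic case the conclusion is immediate, and the JSJ case is excluded by Corollary \ref{cor-satellite}, which would force $w(K)\geq 4$ and contradict our hypothesis.

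Finally, suppose $K$ is composite and write $K=K_1\#\cdots\#K_n$ as a connected sum of $n\geq 2$ nontrivial prime knots. I would invoke the inequality
\[
w(K_1\#\cdots\#K_n)\ \geq\ \sum_{i=1}^n w(K_i)-(n-1),
\]
which follows from a Grushko-type rank estimate applied to the iterated amalgamated free-product decomposition of $\pi_1(S^3\setminus K)$ along the meridional $\Z$ subgroups associated to the summing spheres. Since each $w(K_i)\geq 2$, this gives $3\geq n+1$, hence $n=2$ and $w(K_1)=w(K_2)=2$, and the $w=2$ case above shows that both $K_1$ and $K_2$ are 2-bridge knots. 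The main obstacle is exactly this additivity bound: one must check that the lower estimate holds for \emph{meridional} generating sets rather than arbitrary generating sets, so that a minimal meridional generating set of $K$ can be split into meridional contributions from each summand.
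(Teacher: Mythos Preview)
Your treatment of the prime case is essentially the paper's: invoke Corollary~\ref{cor-satellite} (equivalently Theorem~\ref{thm-3meridians}) to rule out a nontrivial JSJ, and then read off ``torus knot or hyperbolic'' from the trichotomy for prime knot complements. Your handling of $w(K)\le 2$ is a harmless extra detail the paper skips.

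For the composite case you take a genuinely different route. You argue directly in the knot group via the additivity bound $w(K_1\#\cdots\#K_n)\ge\sum w(K_i)-(n-1)$, which forces $n=2$ and $w(K_1)=w(K_2)=2$. The paper instead passes to the $2$-fold branched cover: from $w(K)=3$ one gets that $\pi_1(M_2(K))$ is $2$-generated, and since $M_2(K_1\#K_2)=M_2(K_1)\#M_2(K_2)$ the group is a genuine free product $\pi_1(M_2(K_1))*\pi_1(M_2(K_2))$; now the \emph{classical} Grushko theorem applies without any meridional bookkeeping, the orbifold theorem ensures each factor is nontrivial, hence cyclic, and then again the orbifold theorem yields that each $K_i$ is $2$-bridge. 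The virtue of the paper's detour is precisely that it sidesteps the obstacle you flag: by turning the amalgam over $\mathbb Z$ into a free product at the level of branched covers, one avoids having to prove a meridional Grushko-type inequality for amalgamated products. Your approach stays within the link group and is more direct once the additivity is in hand; that inequality can indeed be established (e.g.\ with the Nielsen-on-trees machinery of \cite{W}, in the spirit of Lemma~\ref{lemNE}), but it is not a one-line consequence of ordinary Grushko, so if you keep this route you should supply or cite a proof of the meridional lower bound rather than leave it as a caveat.
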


Theorem \ref{thm-3meridians} suggests the following strengthening of Question \ref{question-wirtinger} for a hyperbolic knot:

\begin{question}\label{question-free}
Let $K \subset S^3$ be a hyperbolic knot. Is a subgroup of  $\pi_1(S^3\setminus K)$ generated by at most $b(K) - 1$ meridians free?
\end{question}

In the case of torus knots the conclusion of Question~\ref{question-free}  has been established  by M. Rost and H. Zieschang (see \cite{Ros}).
The case of hyperbolic 3-bridge knots follows from a general result for subgroups generated by two meridians in a knot group, see Proposition \ref{propNE} in  Section \ref{sec-free}. It should be noted that the conclusion of Question~\ref{question-free} does obviously not hold for connected sums of knots, it is moreover not difficult to come up with examples of prime knots with nontrivial JSJ-decomposition for which the conclusion does not hold either.

\smallskip There is a natural partial order on the set of links in $S^3$ given by degree-one maps: We say that a link $L \subset S^3$ {\it dominates} a link $L' \subset S^3$ and write $L \geq L'$ if there is a proper degree-one map 
$f: E(L) \to E(L')$ between the exteriors of $L$ and $L'$ whose restriction to the boundary is a homeomorphism which extends to the regular neighborhoods of $L$ and $L'$. It defines a partial order on the set of links in $S^3$, and it is an open problem to characterize minimal elements. In particular  the behavior  of the bridge number with respect to this order is far from being understood:

\begin{question}\label{bridgeversusorder} Let $L$ and $L'$ be links in $S^3$. Does $L \geq L'$ imply $b(L) \geq b(L')$?
\end{question}

It follows from the definition that the epimorphism $f_{\star}: \pi_1(S^3\setminus L) \to \pi_1(S^3\setminus L')$ induced by the degree-one map $f: E(L) \to E(L')$ preserves the meridians and so that $w(L) \geq w(L')$ whenever $L \geq L'$. Therefore an affirmative  answer to Question \ref{question-wirtinger} would imply an affirmative answer to Question \ref{bridgeversusorder}.

The answer to Question \ref{bridgeversusorder} is certainly positive when $b(L') = 2$ as in this case any knot $L$ with $L \geq L'$ cannot be trivial. Our results moreover imply the following:

\begin{proposition}\label{prop-degreeone} Let $L \geq L'$ be two  links in $S^3$.

\noindent a) If $b(L') = 3$, then $b(L) \geq 3$.

\noindent b) If $b(L') = 4$ and the 2-fold cover of $S^3$ branched along $L$ is a graph manifold, then $b(L) \geq 4$.
\end{proposition}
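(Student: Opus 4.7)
Part (a) should be essentially free: as noted in the paragraph preceding the proposition, $L \geq L'$ forces $w(L) \geq w(L')$, while Dehn's Lemma and the equivalence $b(L')=2 \Leftrightarrow w(L')=2$ of \cite{Boi6} rule out $w(L')\leq 2$ when $b(L')=3$, so $b(L)\geq w(L)\geq w(L')\geq 3$ immediately.

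The content of the proposition sits in part (b), and my plan is to apply Corollary \ref{cor-4bridges} not to $L$ (whose meridional rank is precisely what we are trying to bound below, so the corollary gives no information there) but to $L'$. The input this requires is that the 2-fold branched cover $\Sigma_2(L')$ is also a graph manifold, a statement not given a priori. The strategy for producing it goes in three steps. First, I would promote the proper degree-one map $f:E(L)\to E(L')$ to a degree-one self-map $\hat f:S^3\to S^3$ carrying $L$ onto $L'$, using the defining property that $f$ restricts to a boundary homeomorphism that extends across regular neighborhoods of $L$ and $L'$. Second, because $f_\star$ preserves meridians, the two canonical $\Z/2$ quotients of $\pi_1(S^3\setminus L)$ and $\pi_1(S^3\setminus L')$ determining the branched covers are compatible, so $\hat f$ lifts to a continuous map $\tilde f:\Sigma_2(L)\to\Sigma_2(L')$; comparing degrees in the resulting commutative square forces $\deg(\tilde f)=1$.

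Third, I would transport the graph-manifold hypothesis from $\Sigma_2(L)$ to $\Sigma_2(L')$ via Gromov's simplicial volume: degree-one maps between closed orientable 3-manifolds are non-increasing on $\|\cdot\|$, and graph manifolds have vanishing simplicial volume, so $\|\Sigma_2(L')\|\leq \|\Sigma_2(L)\|=0$; by Soma's characterization of closed orientable 3-manifolds of vanishing simplicial volume, $\Sigma_2(L')$ is then a graph manifold. Corollary \ref{cor-4bridges} applied to $L'$ now gives $w(L')=4$, and combined with $w(L)\geq w(L')$ this closes the argument: $b(L)\geq w(L)\geq 4$. The only step that is not bookkeeping is the branched-cover lifting, but this is a routine consequence of the meridian-preservation of $f_\star$ already highlighted in the introduction; the rest is essentially an assembly of the cited results.
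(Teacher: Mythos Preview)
Your outline matches the paper's almost exactly: lift the degree-one map to the branched double covers, push the vanishing simplicial volume from $\Sigma_2(L)$ to $\Sigma_2(L')$, and then extract $w(L')=4$ from the results of the paper. Two points deserve comment.

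First, a cosmetic difference: where you invoke Soma's characterization (hence, implicitly, full geometrization) to pass from $\|\Sigma_2(L')\|=0$ to ``$\Sigma_2(L')$ is a graph manifold'', the paper uses the orbifold theorem \cite{BP}. This is legitimate (and lighter) here because $\Sigma_2(L')$ carries the covering involution, so the associated orbifold is geometrizable without Perelman; the conclusion is that $\Sigma_2(L')$ is a \emph{connected sum} of graph manifolds.

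Second, and this is the substantive point, you apply Corollary~\ref{cor-4bridges} to $L'$ without checking that $L'$ is prime. The paper's authors do not do this: they split into the cases $L'$ prime and $L'=L'_1\sharp L'_2$. In the prime case Corollary~\ref{cor-4bridges} applies directly. In the composite case $\Sigma_2(L')=\Sigma_2(L'_1)\sharp\Sigma_2(L'_2)$ is reducible, and the paper's proof of Theorem~\ref{thm-main} (on which Corollary~\ref{cor-4bridges} rests) works through the JSJ decomposition of $\Sigma_2(L)$ and so implicitly assumes irreducibility; that is precisely why the authors treat the composite case by hand. There, Schubert's additivity gives $b(L'_1)=2$, $b(L'_2)=3$, whence $w(L'_1)=2$ and $w(L'_2)=3$ by \cite{Boi6}, and the meridian-preserving retractions $\pi_1(E(L'))\to\pi_1(E(L'_i))$ force $w(L')\ge w(L'_1)+w(L'_2)-1=4$. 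Your argument as written skips this case; it is a short addition, but without it the appeal to Corollary~\ref{cor-4bridges} is not fully justified.
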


In Section \ref{sec-prop1}, we recall the definition and some properties of arborescent links and show that an arborescent link $L$  with $w(L) = 3$ is hyperbolic. Section \ref{sec-thm} is devoted 
to the proof of Theorem \ref{thm-main} for arborescent links. Section \ref{sec-free} contains the proof of Theorem \ref{thm-3meridians}. In Section \ref{sec-final} we complete the proof of Theorem \ref{thm-main} for the case of non-arborescent links. Then Section \ref{sec-degreeone} contains the proof of Proposition \ref{prop-degreeone}.

%%%%%%%%%%%%%%%%%%%%%%%%%%%%%%%%%%%%%%%%%%%%%%%%%%%%%%%%

\section{Arborescent links}\label{sec-prop1}

A (3,1)-{\it manifold pair} 
is a pair $(M,L)$ of a compact oriented 3-manifold $M$
and a proper 1-submanifold $L$ of $M$.
By a {\it surface $F$ in $(M,L)$},
we mean a surface $F$ in $M$ intersecting $L$ transversely.
Two surfaces $F$ and $F'$ in $(M,L)$ are said to be {\it pairwise isotopic} ({\it isotopic}, in brief,)
if there is a homeomorphism $f:(M,L)\rightarrow(M,L)$ such that
$f(F)=F'$ and $f$ is pairwise isotopic to the identity.
We call a (3,1)-manifold pair a {\it tangle} if $M$ is homeomorphic to $B^3$. 
A {\it trivial tangle} 
is a (3,1)-manifold pair $(B^3, L)$,
where $L$ is the union of two properly embedded  arcs in the 3-ball $B^3$
which  together with arcs on the boundary of $B^3$ bound disjoint disks.
A {\it rational tangle} is a trivial tangle $(B^3, L)$ endowed with a homeomorphism from $\partial (B^3, L)$ to the \lq\lq standard\rq\rq\ pair of the 2-sphere and the union of four points on the sphere.
%hence it is
%a pair $(B^3, L)$,
%where $L$ is the union of two properly embedded unknotted  arcs in the 3-ball $B^3$ and such that there is a properly embedded disk $(D^2, \partial D^2) \hookrightarrow (B^3, \partial B^3)$ which misses $L$ and separates the two arcs. 
%A tangle $(B^3, L)$ is rational if and only if the $2$-fold cover of $B^3$ branched along $L$ is a solid torus $S^1 \times D^2$.
It is well-known that rational tangles (up to isotopy fixing the boundaries) correspond to elements of $\mathbb{Q}\cup \{ \infty \}$,
called the {\it slopes} of the rational tangles.
For example, the rational tangle of slope $\beta/\alpha$ 
can be illustrated as in Figure \ref{fig-rtangle},
where $\alpha, \beta$ are defined by the continued fraction 
\begin{eqnarray*}
\begin{array}{rrl}
\displaystyle\frac{\beta}{\alpha}&=&-a_0+[a_1, -a_2, \dots, \pm a_m]\\
&:=&-a_0+\displaystyle\frac{1}{a_1+\displaystyle\frac{1}{-a_2+\displaystyle\frac{1}{\cdots+\displaystyle\frac{1}{\pm a_m}}}}
\end{array}
\end{eqnarray*}
together with the condition that $\alpha$ and $\beta$ are relatively prime 
and $\alpha\geq 0$. 
Here, the numbers $a_i$ denote the numbers of right-hand half twists. 
%
%%%%%%%%%%%%%
\begin{figure}[btp]
\begin{center}
\includegraphics*[width=3.5cm]{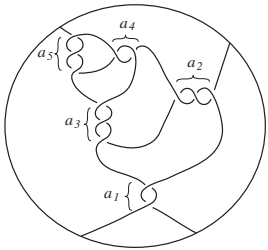}
\end{center}
\caption{$n=5, a_0=0, a_1=2, a_2=3, a_3=3, a_4=2, a_5=3$ and 
$\beta/\alpha=31/50$.}
\label{fig-rtangle}
\end{figure}
%%%%%%%%%%%%%
%

A {\it Montesinos pair} is a (3,1)-manifold pair which is built from the pair 
in Figure~\ref{fig-mont-pair}(1) or Figure~\ref{fig-mont-pair}(2)
by plugging some of the holes with rational tangles of finite slopes.
%
%%%%%%%%%%%%%
\begin{figure}[btp]
\begin{center}
\includegraphics*[width=9cm]{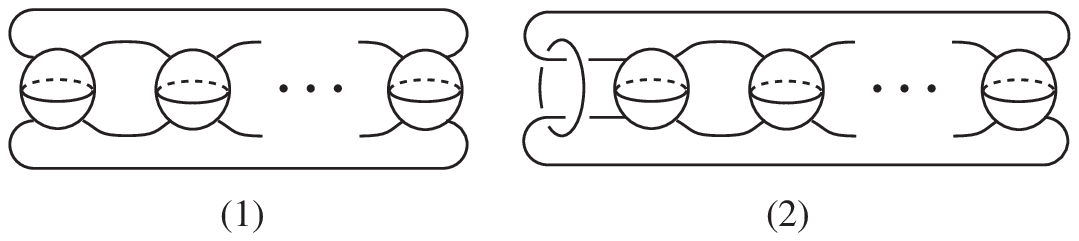}
\end{center}
\caption{}
\label{fig-mont-pair}
\end{figure}
%%%%%%%%%%%%%
%
We say that a Montesinos pair is {\it trivial} 
if it is homeomorphic to a rational tangle or $(S, P)\times I$,
where $S$ is a 2-sphere, $P$ is the union of four distinct points on $S$ 
and $I$ is a closed interval.
A {\it Montesinos link} is a link 
obtained by plugging the remaining holes of a Montesinos pair 
in Figure \ref{fig-mont-pair}(1)  
with rational tangles of finite slopes,
as shown in Figure \ref{fig-mon}. 
%
%%%%%%%%%%%%%
\begin{figure}[btp]
\begin{center}
\includegraphics*[width=5cm]{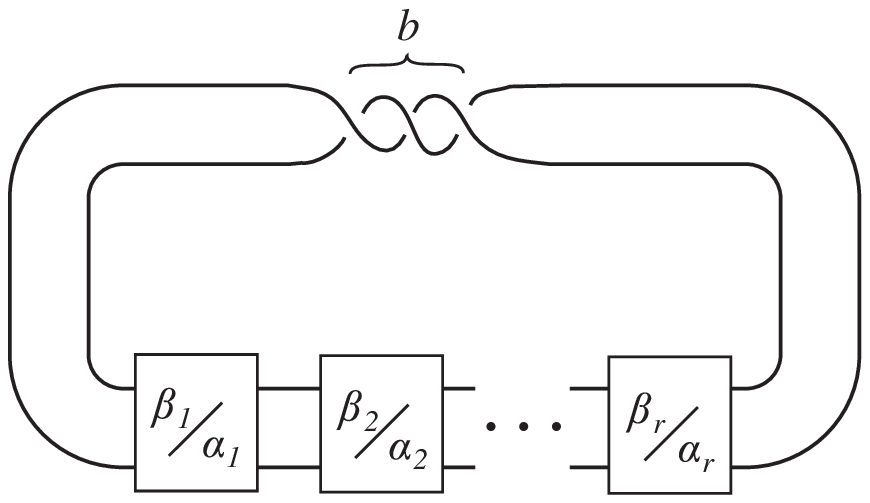}
\end{center}
\caption{$b=3$.}
\label{fig-mon}
\end{figure}
%%%%%%%%%%%%%
%
Unless otherwise stated, 
we assume that the slope $\beta_i/\alpha_i$ of each rational tangle is not an integer, that is, $\alpha_i >1$. 
The above Montesinos link is denoted by 
$L(-b; \beta_1/\alpha_1, \dots, \beta_r/\alpha_r)$. 
(We note that this is denoted by the symbol 
$m(0|b;(\alpha_1,\beta_1),(\alpha_2,\beta_2),\dots,(\alpha_r,\beta_r))$ in \cite{Boi3}.)
%A Montesinos link is said to be {\it elliptic}
%if it is a nontrivial 2-bridge link or 
%if $r=3$ and $\frac{1}{\alpha_1}+\frac{1}{\alpha_2}+\frac{1}{\alpha_3}>1$.
An {\it arborescent link} is a link in $S^3$
obtained by gluing some Montesinos pairs in their boundaries
as in Figure \ref{fig-alink3}, see \cite{Bon}.
%
%%%%%%%%%%%%%
\begin{figure}[btp]
\begin{center}
\includegraphics*[width=8cm]{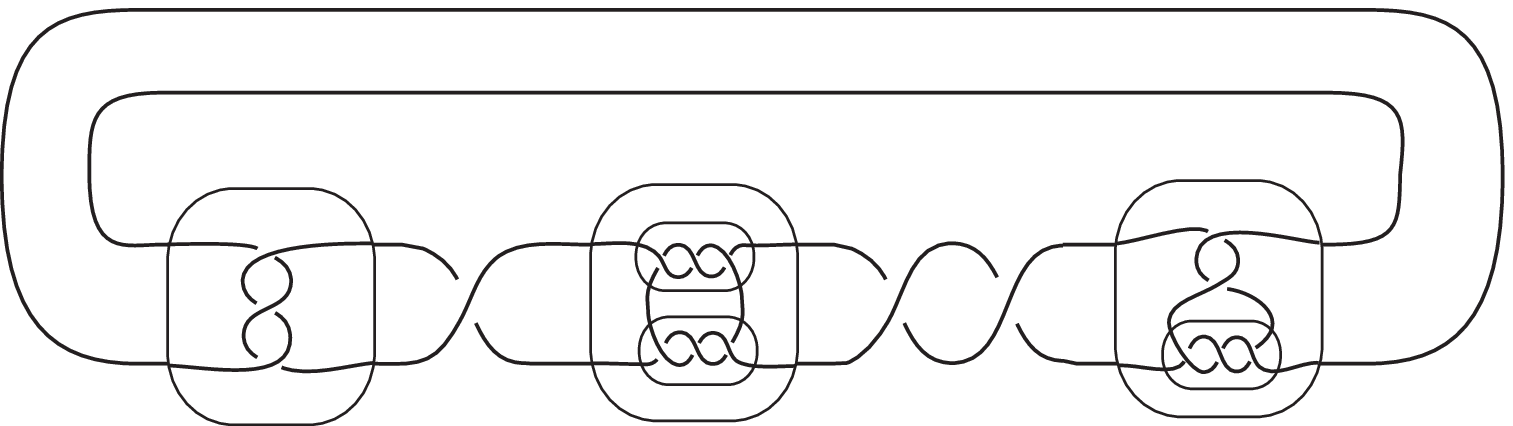}
\end{center}
\caption{}
\label{fig-alink3}
\end{figure}
%%%%%%%%%%%%%
%

%
\vspace{2mm}

The main result of this section is the following proposition which is used to prove Theorem \ref{thm-main} in Section \ref{sec-thm} when the link $L$ is an arborescent link.

\begin{proposition}\label{prop-1}
Let $L$ be an arborescent link which is not a generalized Montesinos link, 
and suppose that $w(L)=3$.
Then $L$ is hyperbolic.
\end{proposition}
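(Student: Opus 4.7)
My plan is to prove Proposition \ref{prop-1} by contradiction: suppose $L$ is arborescent with $w(L) = 3$, not a generalized Montesinos link, and yet not hyperbolic. Then the exterior $E(L) = S^{3} \setminus \nu(L)$ must carry an obstruction to hyperbolicity, namely $L$ is reducible (split or composite), $E(L)$ is Seifert fibered, or $E(L)$ contains an essential torus or annulus. The goal is to show each possibility is incompatible with the hypothesis.

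The easy cases dispatch quickly. A Seifert fibered exterior forces $L$ to be a torus link, which is generalized Montesinos, a contradiction. If $L = L_{1} \sqcup L_{2}$ is split, then $w(L) = w(L_{1}) + w(L_{2}) = 3$, and combined with the classification of meridional rank $\leq 2$ links (rank-$1$ links are unknots, rank-$2$ links are $2$-bridge by \cite{Boi6}) this places $L$ inside the generalized Montesinos class. Similarly, a composite $L = L_{1} \# L_{2}$ satisfies $w(L) = w(L_{1}) + w(L_{2}) - 1 = 3$, forcing each summand to be $2$-bridge, and the connected sum of two $2$-bridge links is generalized Montesinos. After these reductions we may assume $L$ is prime and non-split.

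The heart of the argument is the case that $L$ is prime with a non-trivial JSJ decomposition of $E(L)$. When $L$ is a knot, Corollary \ref{cor-satellite} immediately yields $w(L) \geq 4$, contradicting $w(L)=3$. For multi-component prime $L$, an essential torus $T \subset E(L)$ bounds a solid torus $V \subset S^{3}$ and realizes $L$ (or a sublink) as a non-trivial satellite pattern inside a companion. I would handle this either by restricting attention to a knotted sublink whose exterior still inherits a non-trivial JSJ decomposition so as to invoke Corollary \ref{cor-satellite} directly, or by adapting the Nielsen-style argument used to prove Theorem \ref{thm-3meridians} to the amalgamated free product $\pi_{1}(E(L)) = A \ast_{\pi_{1}(T)} B$ induced by $T$: any triple of meridians either generates a free subgroup, which cannot exhaust the full fundamental group, or is conjugate into a single JSJ piece, from which the arborescent structure forces $L$ either to be a generalized Montesinos link or to fail primeness.

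The principal obstacle I anticipate is precisely this multi-component prime satellite case, since Corollary \ref{cor-satellite} is formulated only for prime knots and its meridian-tracking becomes delicate when peripheral pieces of $E(L)$ contain several cusps. I expect the resolution to lean on the specific combinatorics of the Bonahon--Siebenmann tree of an arborescent link to constrain how meridians can be distributed across the JSJ tori, and to rule out configurations in which three meridians suffice to generate $\pi_{1}(E(L))$ across the amalgamation.
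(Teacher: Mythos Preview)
Your approach diverges substantially from the paper's, and it contains a genuine gap.

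The paper does not analyze obstructions to hyperbolicity abstractly. Instead it invokes the Bonahon--Siebenmann classification (see \cite{Bon}, cf.\ \cite{Fut}, \cite[Proposition~3]{Jan2}): a non-hyperbolic arborescent link is either (I) a $(2,n)$ torus link, (II) a link with two parallel components each bounding a twice-punctured disk in the exterior, or (III) a pretzel link $P(p,q,r,-1)$ with $1/p+1/q+1/r\ge 1$. Families (I) and (III) are generalized Montesinos, so only family (II) needs work; the paper then uses $w(L)=3$, the fact that the $2$-fold branched cover of an arborescent link is a graph manifold, \cite[Proposition~20(2)]{Boi8}, and \cite[Proposition~4(1)]{Jan2} to pin $L$ down to a single explicit link, which turns out to be generalized Montesinos after all.

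Your case analysis misses exactly this family. You list ``essential torus or annulus'' as an obstruction but then treat only tori, implicitly folding essential annuli into the split/composite case. For a multi-component link this is false: an essential annulus can run between two distinct boundary components of $E(L)$ without making $L$ split or composite, and such a link can be prime, non-split, atoroidal, and not Seifert fibered. Family (II) is precisely of this type, and it is where the actual content of the proposition lives. Your toroidal argument via Corollary~\ref{cor-satellite} (which is not circular, since that corollary is proved independently of Proposition~\ref{prop-1}) never engages with it.

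Two smaller points: the equality $w(L_1\# L_2)=w(L_1)+w(L_2)-1$ you use for the composite case is not obvious in the $\ge$ direction and would itself require an argument; and you already flag the multi-component toroidal case as unresolved, so even within your framework the proof is incomplete. The clean route is the one the paper takes: use the explicit Bonahon--Siebenmann list and dispose of family (II) directly.
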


\begin{proof} %[Proof of Proposition \ref{prop-1}]
Let $L$ be an arborescent link which is not a generalized Montesinos link,
and suppose that $w(L)=3$.
Assume on the contrary that $L$ is not hyperbolic.
By \cite{Bon} (cf. \cite{Fut} or \cite[Proposition 3]{Jan2}), 
$L$ is equivalent to one of the links in Figure \ref{fig-non-hyp},
%
%%%%%%%%%%%%%
\begin{figure}[btp]
\begin{center}
\includegraphics*[width=9cm]{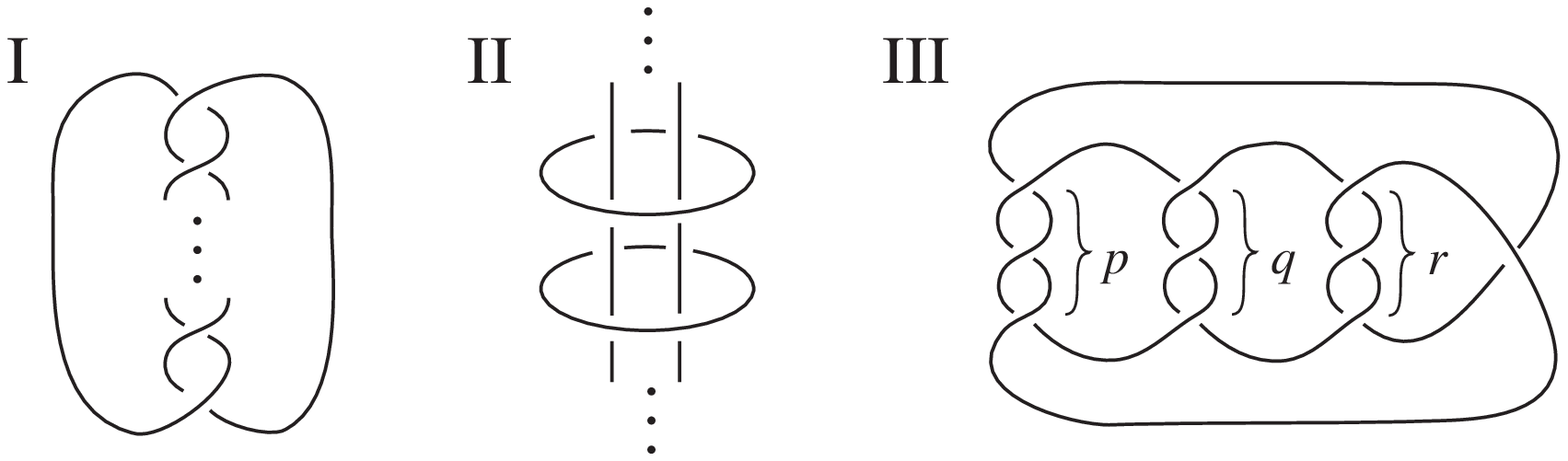}
\end{center}
\caption{}
\label{fig-non-hyp}
\end{figure}
%%%%%%%%%%%%%
%
namely, one of the following holds.
\begin{itemize}
\item[I.] $L$ is a torus knot or link of type $(2,n)$ for some integer $n$,
\item[II.] $L$ has two parallel components, each of which bounds a twice-punctured disk properly embedded in $S^3\setminus L$,
\item[III.] $L$ or its reflection is the pretzel link $P(p,q,r,-1):=L(-1;1/p,1/q,1,r)$, where $p,q,r\geq 2$ and $\displaystyle{\frac{1}{p}+\frac{1}{q}+\frac{1}{r}\geq 1}$.
\end{itemize}
By the assumptions that $L$ is not a generalized Montesinos link and that $w(L)=3$,
$L$ must be equivalent to a link in Figure \ref{fig-non-hyp} II,
namely, $L$ has two parallel components, 
each of which bounds a twice-punctured disk properly embedded in $S^3\setminus L$.
Moreover, since $w(L)=3$, $L$ must have 3 components.
Recall that the 2-fold branched cover of $S^3$ branched along $L$ is a graph manifold.
By \cite[Proposition 20 (2)]{Boi8}, the union of any two components of $L$ is a 2-bridge link.
Then, by arguments in the proof of \cite[Proposition 4 (1)]{Jan2},
we see that $L$ is equivalent to the link in Figure \ref{fig-except}.
%
%%%%%%%%%%%%%
\begin{figure}[btp]
\begin{center}
\includegraphics*[width=3cm]{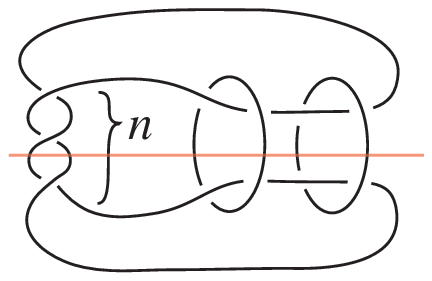}
\end{center}
\caption{}
\label{fig-except}
\end{figure}
%%%%%%%%%%%%%
%
However, the link in the figure is a generalized Montesinos link, which contradicts the assumption.
Hence, $L$ is hyperbolic.
\end{proof}

%%%%%%%%%%%%%%%%%%%%%%%%%%%%%%%%%%%%%%%%%%%%%%%%%%%%%%%%

\section{Proof of Theorem \ref{thm-main} for arborescent links}\label{sec-thm}

Let $L$ be an arborescent link and suppose that $w(L)=3$.
If $L$ is a generalized Montesinos link, then we have $b(L)=3$ by \cite{Boi3}.
Thus we assume that $L$ is not a generalized Montesinos link in the remainder of this proof.
Then, by Proposition \ref{prop-1}, $L$ is hyperbolic.
Let $P=P_1\cup\cdots\cup P_k$ be the union of Conway spheres which gives the characteristic decomposition of $L$ 
(see \cite{Bon} for definition of the characteristic decompositions of link: this decomposition correspond to the geometric decomposition of the 3-orbifold with underlying space $S^3$ and singular locus $L$ with branching index 2, see \cite{BMP}).
Let $M:=M_2(L)$ be the 2-fold cover of $S^3$ branched along $L$,
and let $T_i$ be the pre-image of $P_i$ in $M$ ($i=1,\dots,k$).
Then each $T_i$ is a separating torus in $M$ and $T=T_1\cup\cdots\cup T_k$ gives the JSJ decomposition of $M$ (cf. \cite[Proposition 4]{Jan2}).
Let $\tau_L$ be the covering involution of the 2-fold branched cover.
By construction, the following hold.
\begin{itemize}
\item[(T1)] each $T_i$ is $\tau_L$-invariant and $\tau|_{T_i}$ is hyper-elliptic, and 
\item[(T2)] $\tau_L$ preserves each JSJ piece and each exceptional fiber of Seifert pieces.
\end{itemize}

Recall that we have an exact sequence
$$
1\rightarrow \pi_1(M)\rightarrow \pi_1(S^3\setminus L)/N \rightarrow \Z/2\Z\rightarrow 1,
$$
where $N$ is the subgroup of $\pi_1(S^3\setminus L)$ normally generated by the squares of the meridians.
%(the quotient group $\pi_1(S^3\setminus L)/N$ is called the {\it $\pi$-orbifold group} of $L$). 
Let $m_1$, $m_2$ and $m_3$ be meridians of $\pi_1(S^3\setminus L)$ generating the group.
For $1\le i\le 3$ we denote the image of $m_i$ in $\pi_1(S^3\setminus L)/N$ again by $m_i$. 
Since $\pi_1(M)$ can be regarded as an index-2 subgroup of $\pi_1(S^3\setminus L)/N$ by the above exact sequence,
any element of $\pi_1(M)$ can be represented as a product of even numbers of $m_1$, $m_2$ and $m_3$'s.
Set $g_1:=m_1m_2$ and $g_2:=m_1m_3$.
Then $g_1$ and $g_2$ generate $\pi_1(M)$.
Let $\alpha$ be the automorphism of $\pi_1(S^3\setminus L)/N$ induced by the conjugation by $m_1$.
Then $\tau_L$ is a realization of $\alpha$.
We see $\alpha(g_i)=m_1g_im_1^{-1}=g_i^{-1}$ for each $i=1,2$,
and hence, $\alpha|_{\pi_1(M)}$ is an automorphism of $\pi_1(M)$ which sends each generator $g_i$ to $g_i^{-1}$.
Namely, $\alpha$ is an {\it inversion} of $\pi_1(M)$ (cf. \cite{Boi8}).
Since $M$ is a graph manifold which admits an inversion, the Heegaard genus of $M$ is $2$ by \cite[Theorem 3]{Boi8}.
Recall that $T=T_1\cup\cdots\cup T_k$ gives the nontrivial JSJ decomposition of $M$, where each $T_i$ is a separating torus in $M$.
By \cite[Proposition 4]{Jan2}, $M$ satisfies one of the following conditions (M1), (M2), (M3) and (M4) which originally come from \cite{Kob}.
\begin{itemize}
\item[(M1)] $M$ is obtained from a Seifert fibered space $M_1$ over a disk with two exceptional fibers and the exterior $M_2$ of a non-hyperbolic 1-bridge knot $K$ in a lens space by gluing their boundaries so that the meridian of $K$ is identified with the regular fiber of $M_1$.
\item[(M2)] $M$ is obtained from a Seifert fibered space $M_1$ over a disk with two or three exceptional fibers and the exterior $M_2$ of a non-hyperbolic 2-bridge knot $K$ in $S^3$ by gluing their boundaries so that the meridian of $K$ is identified with the regular fiber of $M_1$.
\item[(M3)]  $M$ is obtained from a Seifert fibered space $M_1$ over a M\"{o}bius band with one or two exceptional fibers and the exterior $M_2$ of a non-hyperbolic 2-bridge knot $K$ in $S^3$ by gluing their boundaries so that the meridian of $K$ is identified with the regular fiber of $M_1$.
\item[(M4)] $M$ is obtained from two Seifert fibered spaces $M_1$ and $M_2$ over a disk with two exceptional fibers and the exterior $M_3$ of a non-hyperbolic 2-bridge link $L=K_1\cup K_2$ in $S^3$ by gluing $\partial (M_1\cup M_2)$ and $\partial M_3$ so that the meridian of $K_i$ is identified with the regular fiber of $M_i$ ($i=1,2$).
\end{itemize}

Assume that $M$ satisfies the condition (M1).
That is, $M$ is obtained from a Seifert fibered space $M_1$ over a disk with two exceptional fibers and the exterior $M_2$ of a non-hyperbolic 1-bridge knot $K$ in a lens space by gluing their boundaries so that the meridian of $K$ is identified with the regular fiber of $M_1$.
By \cite{Kob}, $M_2$ satisfies one of the following.
\begin{itemize}
\item[(M1-a)] $M_2$ is a Seifert fibered space over a disk with two exceptional fibers, or
\item[(M1-b)] $M_2$ is a Seifert fibered space over a M\"{o}bius band with one exceptional fiber.
\end{itemize}

First we assume that $M_2$ satisfies (M1-a).
%Let $\beta_i/\alpha_i$ and $\beta_i'/\alpha_i'$ be rational numbers
%such that $M_i=D(\beta_i/\alpha_i,\beta_i'/\alpha_i')$ w.r.t. $h_i$ and $m_i$ for $i=1,2$.
Recall that the covering involution $\tau_L$ satisfies the conditions (T1) and (T2).
Since the center of $\pi_1(M)$ is trivial, the strong equivalence class of $\tau_L$ is determined by its image in the mapping class group by \cite[Theorem 7.1]{Tol}. 
By \cite[Lemma 4 (1)]{Jan2} (or \cite[Proposition 6 (1)]{Jan2}), we may assume that the restriction $\tau_L|_{M_i}$ $(i=1,2)$ is a fiber-preserving involution of $M_i$ which induces the involution on the base orbifold as illustrated in Figure \ref{fig-inv1}.
%
%%%%%%%%%%%%%
\begin{figure}[btp]
\begin{center}
\includegraphics*[width=4cm]{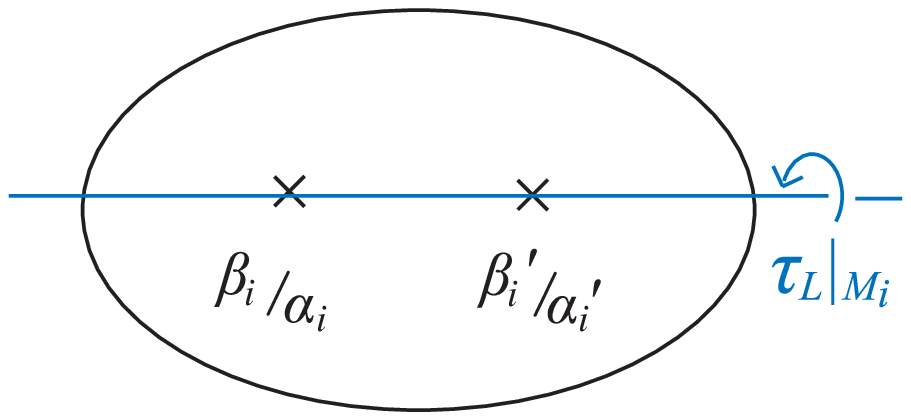}
\end{center}
\caption{}
\label{fig-inv1}
\end{figure}
%%%%%%%%%%%%%
%
Note that each quotient orbifold $(M_i,\fix \tau_L|_{M_i})/\tau_L|_{M_i}$ ($i=1,2$) is a Montesinos pair with two rational tangles.
By gluing them so that the image of the meridian of $K$ is identified with the image of the regular fiber of $M_1$, we see that $L$ must be a 3-bridge link in Figure \ref{fig-l1}
(see also \cite[Section 7, Case 1.1]{Jan2}).
%
%%%%%%%%%%%%%
\begin{figure}[btp]
\begin{center}
\includegraphics*[width=3cm]{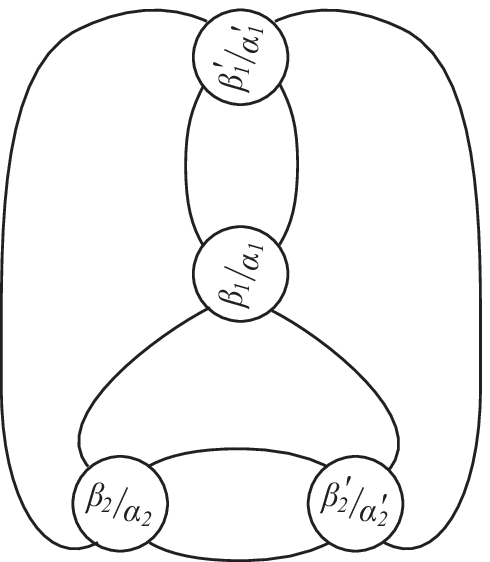}
\end{center}
\caption{}
\label{fig-l1}
\end{figure}
%%%%%%%%%%%%%
%

Assume that $M_2$ satisfies (M1-b).
By \cite[Lemma 4 (1) and (2)]{Jan2} together with \cite[Theorem 7.1]{Tol}, 
we may assume that the restriction $\tau_L|_{M_i}$ $(i=1,2)$ is a fiber-preserving involution of $M_i$ which induces the involution on the base orbifold as illustrated in Figure \ref{fig-inv2} ($i$).
%
%%%%%%%%%%%%%
\begin{figure}[btp]
\begin{center}
\includegraphics*[width=7cm]{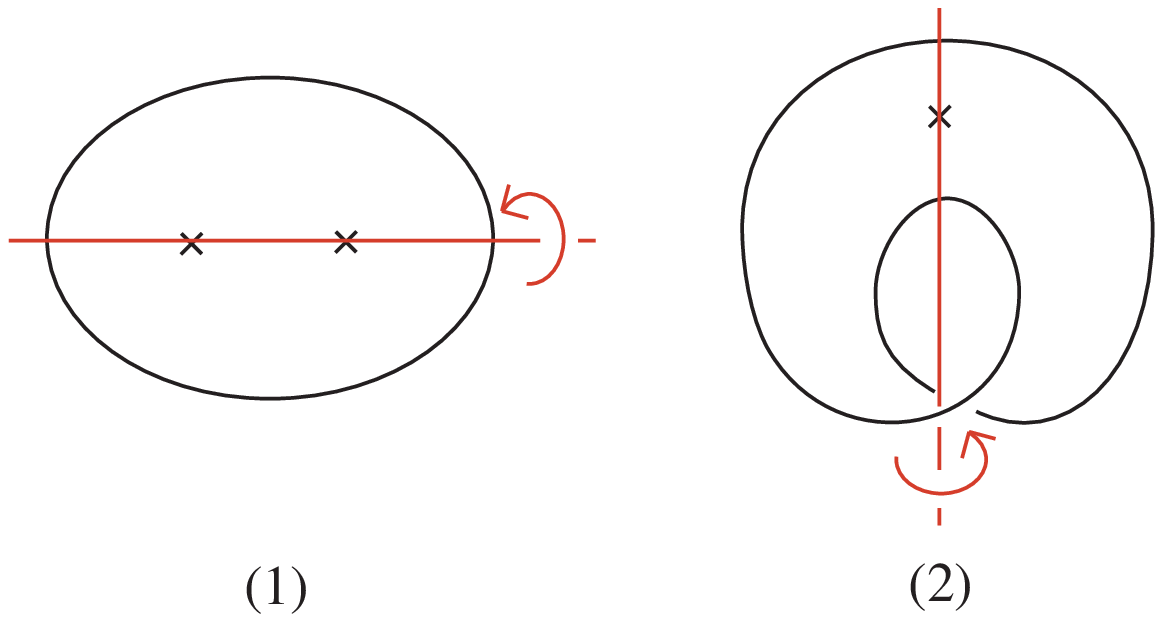}
\end{center}
\caption{}
\label{fig-inv2}
\end{figure}
%%%%%%%%%%%%%
%
By considering the quotient orbifold $(M, \fix\tau_L)/\tau_L$, we see that $L$ is equivalent to a 3-bridge link in Figure \ref{fig-l2}
(see also \cite[Section 7, Case 1.2]{Jan2}).
%
%%%%%%%%%%%%%
\begin{figure}[btp]
\begin{center}
\includegraphics*[width=3cm]{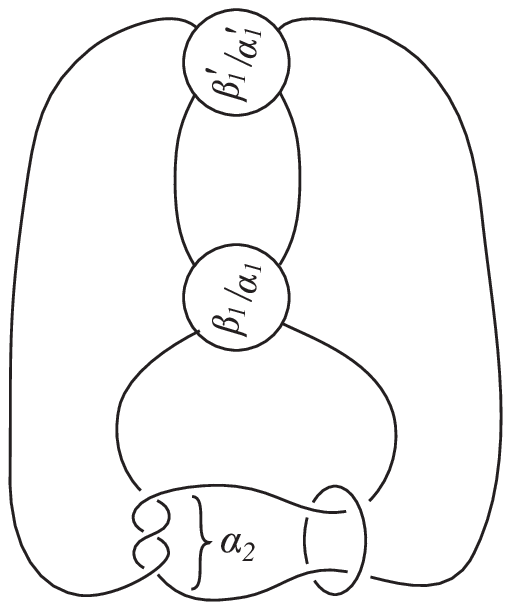}
\end{center}
\caption{}
\label{fig-l2}
\end{figure}
%%%%%%%%%%%%%
% 

The remaining cases can be treated similarly except for the case where $M$ satisfies the condition (M3).
Thus, in the rest of this section, we assume that $M$ satisfies the condition (M3).
That is, $M$ is obtained from a Seifert fibered space $M_1$ over a M\"{o}bius band with one or two exceptional fibers and the exterior $M_2$ of a non-hyperbolic 2-bridge knot $K$ in $S^3$ by gluing their boundaries so that the meridian of $K$ is identified with the regular fiber of $M_1$.
By an argument similar to those for the previous cases, we can see that $L$ is equivalent to the link in Figure \ref{fig-l3}.
%
%%%%%%%%%%%%%
\begin{figure}[btp]
\begin{center}
\includegraphics*[width=3cm]{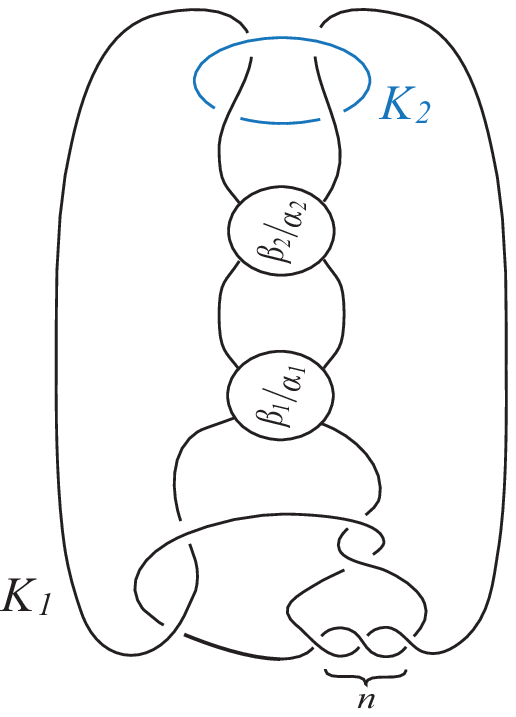}
\end{center}
\caption{}
\label{fig-l3}
\end{figure}
%%%%%%%%%%%%%
% 
For the link in Figure \ref{fig-l3}, we may assume that the rational number $\beta_1/\alpha_1$ is not an integer, and that the rational number $\beta_2/\alpha_2$ is an integer or not an integer according to whether the number of the exceptional fibers of $M_1$ is one or two.
We can see that the bridge number of the link $K_1\cup K_2$ in the figure is at least 4, since $K_1$ is a 3-bridge link by \cite{Boi3} and \cite{Jan2}.
However, by \cite[Lemma 1.7]{Boi3} and \cite[Corollary 3.3]{Boi6}, we have $w(K_1\cup K_2)\geq w(K_1)+w(K_2) = 3+1=4$, which contradicts the assumption that $w(L)=3$.

This completes the proof of Theorem \ref{thm-main} for arborescent knots.

%%%%%%%%%%%%%%%%%%%%%%%%%%%%%%%%%%%%%%%%%%%%%%%%%%%%%%%%

\section{Subgroups generated by meridians}\label{sec-free}

In this section  we study subgroups of knot and link groups that are generated by two or three meridians and we give a proof of Theorem \ref{thm-3meridians}.

\smallskip Let $L$ be a link in $S^3$ and $E(L)$ be the link space. Choose annuli and tori as follows:
\begin{enumerate}
\item Let $\{A_1,\ldots ,A_n\}$ be a maximal collection of non-parallel and properly embedded essential annuli in $E(L)$. Thus the closures of the components of ${E(L)\backslash \underset{1\le i\le n}{\cup}A_i}$ are the link spaces $E(L_1),\ldots E(L_k)$ where the $L_i$ are  the prime factors of $L$.
\item Let $\{T_1,\ldots ,T_m\}$ be the union of the characteristic families of tori of the manifolds $E(L_i)$ for $1\le i\le n$.
\end{enumerate}

Thus the closures of the components of $$E(L)\backslash \left(\left( \underset{1\le i\le n}{\cup}A_i\right)\cup \left(\underset{1\le i\le m}{\cup}T_i\right)\right)$$ are the pieces of the JSJ-decompositions of the link spaces $E(L_i)$ with $1\le i\le n$.  We call such a piece peripheral if it meets a boundary component of $E(L)$.

\medskip Let now $G=\pi_1(E(L))$. Let $\mathbb A_L$ be the graph of group decomposition of $G$ corresponding to the splitting of $E(L)$ along the $A_i$ and $T_i$. Thus the vertex groups are the fundamental groups of pieces of the JSJ-decompositions of the $E(L_i)$ and the edge groups are infinite cyclic or isomorphic to $\mathbb Z^2$.

\begin{lemma}\label{lemNE} Let $L$ be as above, $G:=\pi_1(E(L))$  and $m_1,\ldots ,m_k\in G$ be meridians (not necessarily corresponding to the same component of $L$).

Then either $\langle m_1,\ldots ,m_k\rangle$ is free or there exist meridians $m_1',\ldots,m_k'\in G$ such that the following hold:
\begin{enumerate}
\item  $(m_1,\ldots ,m_k)$ is Nielsen-equivalent to $(m_1',\ldots ,m_k')$ and $m_i$ is conjugate to $m_i'$ for $1\le i\le k$.
\item  There exist $i\neq j\in\{1,\ldots ,k\}$ such that $\langle m_i',m_j'\rangle$ is conjugate to a vertex group of $\mathbb A_L$ that corresponds to a peripheral piece of some $E(L_i)$. Moreover $m_i'$ and $m_j'$ are conjugate to meridians in this vertex group.
\end{enumerate}
\end{lemma}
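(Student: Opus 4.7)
The plan is to apply a Nielsen reduction for groups acting on trees to the action of $H=\langle m_1,\ldots,m_k\rangle$ on the Bass--Serre tree $T$ of the graph of groups $\mathbb A_L$. Every meridian of $G=\pi_1(E(L))$ is conjugate into the fundamental group of the peripheral JSJ-piece of the prime factor to which it belongs, so each $m_i$ fixes at least one vertex of $T$, i.e.\ acts elliptically. First I would conjugate each $m_i$ individually so that the chosen fixed vertices $v_i\in T$ all lie in a preferred fundamental domain for the $G$-action; this operation preserves both the conjugacy class of $m_i$ and that of the subgroup $H$.

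Next I would pass to a tuple $(m_1',\ldots,m_k')$, Nielsen-equivalent to $(m_1,\ldots,m_k)$, that is minimal for the Weidmann-style complexity built from the convex hull of the fixed vertex set together with secondary data measuring overlaps of elliptic fixed subtrees. Once this reduction terminates, the standard dichotomy of the Nielsen method on trees yields two alternatives. Either the fixed subtrees $\fix(m_i')$ are pairwise edge-disjoint and in general position, in which case a ping-pong argument on $T$ gives $H\cong \langle m_1'\rangle\ast\cdots\ast\langle m_k'\rangle$; since each meridian has infinite order this yields the first alternative that $H$ is free. Or there exist $i\ne j$ with $\fix(m_i')\cap\fix(m_j')\neq\emptyset$, so $\langle m_i',m_j'\rangle$ is contained in the stabilizer $G_v$ of some vertex $v\in T$.

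To conclude in the latter case I would argue that the vertex $v$ must correspond to a peripheral piece. The edges of $\mathbb A_L$ coming from the annuli $A_j$ have cyclic stabilizers generated by meridian classes, and both endpoints of such an edge are peripheral vertices since $\partial A_j\subset\partial E(L)$. The edges coming from the interior JSJ-tori $T_j$ have stabilizers isomorphic to $\Z^2$ lying in the interior of some $E(L_\ell)$, and such a subgroup cannot contain a meridian of $L$, as a meridional loop of $L$ is not freely homotopic to any essential curve on an interior JSJ-torus. Consequently any meridional elliptic element of $G$ fixes only peripheral vertices of $T$; inside $G_v$ both $m_i'$ and $m_j'$ are still meridians of $L$, giving the second alternative of the statement. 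The main obstacle I anticipate is controlling the Nielsen reduction so that each $m_i'$ remains conjugate to the original $m_i$ (rather than becoming a genuine product $m_i m_j^{\pm 1}\cdots$); this requires a careful analysis in the minimal configuration to verify that product moves can always be avoided and replaced by conjugations, exploiting the elliptic nature of all generators and the specific structure of the edge groups of $\mathbb A_L$.
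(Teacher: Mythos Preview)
Your strategy coincides with the paper's: act on the Bass--Serre tree of $\mathbb A_L$, run a Nielsen-type reduction, and read off either freeness or a common fixed vertex for two of the modified meridians, which must then be peripheral. The obstacle you isolate at the end---guaranteeing that each $m_i'$ stays conjugate to $m_i$ rather than turning into a genuine product---is precisely the point the paper does not argue by hand. Instead it invokes Theorem~7 of Weidmann's paper on the Nielsen method for groups acting on trees, applied to the partitioned tuple $(\{m_1\},\ldots,\{m_k\},\emptyset)$. In that formulation the elementary moves on a singleton block are conjugations by elements of the ambient group, so conjugacy of each $m_i'$ with $m_i$ is built into the conclusion; you should cite this rather than try to avoid product moves ad hoc.

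There is one further step you elide. Theorem~7 of \cite{W} only produces $i\neq j$ such that nontrivial \emph{powers} of $m_i'$ and $m_j'$ fix a common vertex. The paper bridges this by noting at the outset that for any meridian $m$ one has $\fix(m)=\fix(m^n)$ for all $n\neq 0$: a meridian is peripheral and hence never a proper root of the regular fiber of a Seifert piece, so passing to powers does not enlarge the fixed set. Combined with your observation that no meridian fixes an edge coming from a JSJ-torus, this forces $m_i'$ and $m_j'$ themselves to fix a common peripheral vertex and to be conjugate to meridians inside that vertex group.
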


\begin{proof} We consider the action of $G$ on the Bass-Serre tree $T$ corresponding to $\mathbb A_L$. Any $m_i$ acts elliptically and the fixed point set of $m_i$ coincides with the fixed point set of $m_i^n$ for any $n\neq 0$. This is true as $m_i$ is a peripheral element and therefore not a proper root of the regular fiber of any Seifert piece.

Moreover for all $i\in\{1,\ldots ,k\}$ the element $m_i$ (and therefore also $m_i^n$ with $n\neq 0$) fixes no edge corresponding to a canonical torus of the JSJ-decom\-po\-sition of some $E(L_i)$ as no power of the meridian is freely homotopic to a curve in one of these tori.% (but possible an edge corresponding to an annulus of the JSJ if $K$ in not prime.

It now follows from Theorem~7 of \cite{W} applied to $(\{m_1\},\ldots ,\{m_k\},\emptyset)$ that either $\langle m_1,\ldots ,m_k\rangle$ is free or that there exist elements $m_1',\ldots,m_k'$ such that the following hold:

\begin{enumerate}
\item $(m_1,\ldots ,m_k)$ is Nielsen-equivalent to $(m_1',\ldots ,m_k')$.
\item $m_i$ is conjugate to $m_i'$ for $1\le i\le k$.
\item There exist $i\neq j\in\{1,\ldots ,k\}$ such that nontrivial powers of $m_i'$ and $m_j'$ fix a common vertex of $T$.
\end{enumerate}
This implies in particular that $m_i'$ is a meridian for $1\le i\le k$. The above remark further implies that not only powers of $m_i'$ and $m_j'$ but $m_i'$ and $m_j'$ themselves fix a common vertex $v$ of $T$  that is therefore also fixed by $\langle m_i',m_j'\rangle$. As both $m_i'$ and $m_j'$ only fix vertices of $T$ that correspond to peripheral pieces it follows that $v$ corresponds to a peripheral piece. As no meridian is conjugate in a peripheral piece to an element corresponding to one of the characteristic tori it follows moreover that $m_i'$ and $m_j'$ are conjugate to meridians in the stabilizer of $v$.%and are in the correspnding groups conjugate to a meridian.
\end{proof}

\begin{proposition}\label{propNE} Let $K$ be a knot and  $G:=\pi_1(E(K))$. If $m_1,m_2\in G$ are meridians that generate a non-free subgroup of $\pi_1(E(K))$ then $K$ has a prime factor $K_1$ that is a 2-bridge knot and $\langle m_1,m_2\rangle$ is conjugate to the subgroup of $G$ corresponding to~$K_1$. 
\end{proposition}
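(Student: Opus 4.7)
The plan is to apply Lemma~\ref{lemNE} to confine the generating set to a single peripheral JSJ piece of some prime factor, and then to establish a dichotomy in such a piece: two meridians must either generate a free subgroup (excluded by hypothesis) or the whole fundamental group.

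I would first apply Lemma~\ref{lemNE} with $k=2$ to $K$. Since $\langle m_1,m_2\rangle$ is non-free by hypothesis, the lemma yields meridians $m_1',m_2'\in G$ that are Nielsen-equivalent to $(m_1,m_2)$, with each $m_i'$ conjugate to $m_i$, and such that after conjugating we may assume $\langle m_1',m_2'\rangle$ lies in $\pi_1(V)$, where $V$ is a peripheral JSJ piece of the exterior $E(K_1)$ of some prime factor $K_1$ of $K$, and $m_1',m_2'$ are themselves meridians of this piece. The rest of the proof consists in showing that $\langle m_1',m_2'\rangle = \pi_1(V)$, that $V = E(K_1)$, and that $K_1$ is a 2-bridge knot.

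Since $V$ is a JSJ piece it is either hyperbolic or Seifert fibered, and in both cases the goal is the dichotomy that two meridians in $V$ either generate a free subgroup or all of $\pi_1(V)$. In the hyperbolic case, $m_1'$ and $m_2'$ are parabolic elements of a finite-covolume torsion-free Kleinian group, and a two-parabolic theorem (in the spirit of results of Adams or Agol on Kleinian groups generated by two parabolics) yields exactly this dichotomy. In the Seifert case, one uses that the only Seifert fibered knot exteriors are torus knot exteriors, for which Rost--Zieschang \cite{Ros} supplies the dichotomy with the non-free alternative forcing a torus knot of type $(2,n)$, together with a direct slope analysis (projecting the meridians to the base orbifold) that rules out the remaining Seifert JSJ pieces (cable spaces and the like) as hosts of a non-free proper 2-meridian subgroup. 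Granted the dichotomy, the non-free hypothesis forces $\langle m_1',m_2'\rangle = \pi_1(V)$; hence $\pi_1(V)$ is generated by two meridians, so by Boileau's theorem \cite{Boi6} that $w(L)=2$ implies $b(L)=2$, the piece $V$ is the exterior of a 2-bridge knot, which has trivial JSJ, and therefore $V=E(K_1)$ with $K_1$ being 2-bridge as claimed. The main obstacle is the dichotomy step itself, which is a structural input on 2-generated peripheral subgroups of 3-manifold groups that must be verified case by case, using Kleinian-group techniques in the hyperbolic case and Seifert-fibration slope analysis together with Rost--Zieschang in the Seifert case.
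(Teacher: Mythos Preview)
Your overall architecture---apply Lemma~\ref{lemNE} and then split into Seifert and hyperbolic peripheral pieces---is exactly the paper's. The Seifert half is fine in outline: Rost--Zieschang handles torus knot spaces, and for cable spaces the paper observes that such a piece is the mapping torus of a finite-order homeomorphism of a punctured disk, so every meridian lies in the free fiber subgroup and hence any two meridians generate a free group; your ``slope analysis'' is vaguer but aims at the same conclusion.

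The hyperbolic case, however, has a genuine gap. The Adams/Agol two-parabolic results say that a non-free Kleinian group generated by two parabolics is (the group of) a $2$-bridge link complement; they do \emph{not} say that this subgroup equals the ambient $\pi_1(V)$. What you get is only that $\langle m_1',m_2'\rangle$ has finite covolume, hence finite index in $\pi_1(V)$. The paper closes this gap by invoking Proposition~2 of \cite{Boi9}, which gives the sharper alternative: either $\langle m_1,m_2\rangle=\pi_1(M)$ with $M$ a $2$-bridge knot exterior, or $[\pi_1(M):\langle m_1,m_2\rangle]=2$ with the double cover a two-component $2$-bridge link exterior. The index-$2$ case is then eliminated by a short but essential argument: since $m_1$ and $m_2$ are conjugate in $\pi_1(M)$ (both are meridians of the same knot), both cusps of the double cover lie over the same cusp of $M$, so $M$ is a knot exterior; then $\langle m_1,m_2\rangle$, being normal of index $2$ and containing one peripheral $\mathbb Z^2$, contains all parabolics and hence all of $\pi_1(M)$, a contradiction. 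Your proposal skips this entirely.

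A related issue is your final appeal to \cite{Boi6}. That result concerns links in $S^3$, but at the point you invoke it $V$ is only known to be a peripheral JSJ piece, possibly with several boundary tori; you have not yet shown it is a knot exterior. In the paper the identification of $M$ as a $2$-bridge knot exterior (and hence $M=E(K_1)$) comes directly out of \cite{Ros} or \cite{Boi9}, not from \cite{Boi6}.
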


\begin{proof} It follows from Lemma~\ref{lemNE} that $\langle m_1,m_2\rangle$ lies in the subgroup corresponding to a peripheral piece of $E(K)$. Thus $\langle m_1,m_2\rangle$ is contained in the subgroup corresponding to the peripheral piece $M$ of the JSJ-decomposition of a prime factor $K_1$ of $K$. Moreover $m_1$ and $m_2$ are in this subgroup conjugate to the meridian. We distinguish two cases:

\smallskip Suppose first that $M$ is Seifert fibered. Thus $M$ is a torus knot space or a cable space. In the first case it follows from~\cite{Ros} that either $\langle m_1,m_2\rangle$ is free or that $\langle m_1,m_2\rangle=\pi_1(M)$ and that $M$ is the exterior of a 2-bridge link which proves the claim. In the second case $M$ is the mapping torus of a disk with finitely many punctures with respect to an automorphism of finite order. Moreover (like all elements conjugate to a meridian) both $m_1$ and $m_2$ lie in the free fundamental group of the fiber which implies that $\langle m_1,m_2\rangle$ is free.

\smallskip Suppose now that $M$ is hyperbolic. We may assume that $\langle m_1,m_2\rangle$ is not Abelian as two conjugates of the meridian that generate an Abelian group must lie in the same conjugate of the same peripheral subgroup and therefore generate a cyclic subgroup.

It follows from Proposition~2 of \cite{Boi9} that either $\langle m_1,m_2\rangle=\pi_1(M)$ and that $M$ is the exterior of a 2-bridge knot or that $|\pi_1(M):\langle m_1,m_2\rangle|=2$ and the 2-sheeted cover $\tilde M$ of $M$ corresponding to $\langle m_1,m_2\rangle$ is the exterior of a 2-bridge link with 2 components.

In the first case the conclusion is immediate. Suppose now that the second case occurs. As $m_1$ and $m_2$ are conjugate in $\pi_1(M)$ it follows that both boundary components of $\tilde M$ cover the same boundary component of $M$, in particular $M$ is a knot exterior. Now  $\langle m_1,m_2\rangle$  contains a conjugate of the peripheral subgroup of $\pi_1(M)$ and is normal in $\pi_1(M)$. It follows that $\langle m_1,m_2\rangle$ contains all parabolic elements of $\pi_1(M)$. As $\pi_1(M)$ is a knot group, it is  generated by parabolic elements. It follows that $\pi_1(M)=\langle m_1,m_2\rangle$ which yields a contradiction.
\end{proof}

The remainder of this section is devoted to the proofs of Theorem \ref{thm-3meridians} and Corollary \ref{cor-3meridians}.

\begin{proof}[Proof of Theorem \ref{thm-3meridians}] It follows from Lemma~\ref{lemNE} that we may assume that $\langle m_1,m_2\rangle$  fixes a vertex $v$ of the Bass-Serre tree that corresponds to the peripheral piece $M$ of $S^3\backslash K$. By Proposition~\ref{propNE} the group $\langle m_1,m_2\rangle$ is free. 

Choose a torus $T$ of the characteristic family of tori for $S^3\backslash K$ such that $T$ cuts $S^3\backslash K$ into two pieces, a geometric knot space $N$ and its complement $\overline M$. Clearly $M$ is contained in $\overline M$. Note that for homology reasons the subgroup $\langle g_1,g_2\rangle$ intersects any conjugate of the free Abelian subgroup $A$ of $G=\pi_1(S^3\backslash K)$ corresponding to $T$ at most in a cyclic subgroup that is a subgroup of the cyclic group generated by the meridian of $N$. Consider the action of $G$ on the Bass-Serre tree corresponding to the amalgamated product $\pi_1(N)*_A\pi_1(\overline M)$. Let $v$ be the vertex fixed by $\langle m_1,m_2\rangle$, note that $v$ corresponds to $\pi_1(\overline M)$.

As the meridian of $N$ does not agree with the fiber of $N$ (if $N$ is Seifert fibered) it follows that no element of $\langle m_1,m_2\rangle$ fixes a vertex in distance more than $1$ from $v$. Moreover $m_3$ fixes a single vertex that corresponds to $\pi_1(\overline M)$. Applying Theorem~7 of \cite{W} to $(\{m_1,m_2\},\{m_3\})$ follows that either $m_3$ also fixes $v$ or that $\langle m_1,m_2,m_3\rangle \cong \langle m_1,m_2\rangle*\langle m_3\rangle\cong F_3$. This proves the claim.
\end{proof}

Corollary \ref{cor-satellite} is a direct consequence of Theorem \ref{thm-3meridians}.  We prove now Corollary~\ref{cor-3meridians}.

\begin{proof}[Proof of Corollary \ref{cor-3meridians}]
Let $K\subset S^3$ be a knot such that $w(K) = 3$. If $K$ is prime, then Theorem \ref{thm-3meridians} implies that $K$ is a hyperbolic knot or a torus knot. If $K= K_1 \sharp K_2$ is a nontrivial connected sum,  then the 2-fold cover $M_2(K)$ of $S^3$ branched along $K$ is the nontrivial connected sum $M_2(K_1) \sharp M_2(K_2)$ of the 2-fold branched covers of $K_1$ and $K_2$. Since $w(K) = 3$, it follows that $\pi_{1}(M_2(K))$ is generated by two elements. Since $\pi_{1}(M_2(K)) = \pi_{1}(M_2(K_1)) * \pi_{1}(M_2(K_2))$ is a free product of nontrivial groups, by the orbifold theorem, see \cite{BP}, it follows that each group $\pi_{1}(M_2(K_1))$ and $\pi_{1}(M_2(K_2))$ is cyclic. Again the orbifold theorem allows to conclude that $K_1$ and $K_2$ are 2-bridge knots.
\end{proof}

%%%%%%%%%%%%%%%%%%%%%%%%%%%%%%%%%%%%%%%%%%%%%%%%%%%%%%%%

\section{Proof of Theorem \ref{thm-main}}\label{sec-final}

Let $L$ be a link in $S^3$, and suppose that the 2-fold branched cover $M:=M_2(L)$ of $S^3$ branched along $L$ is a graph manifold.
Since we have already treated the case when $L$ is an arborescent link in Section \ref{sec-thm}, we  assume here that $L$ is not an arborescent link
and that $w(L)=3$.

We first assume that $M$ is a Seifert fibered space.
Then $L$ is either a (generalized) Montesinos link or a Seifert link, i.e., $S^3\setminus L$ admits a Seifert fibration.
If $L$ is a (generalized) Montesinos link or a torus link, then we have $b(L)=3$ by \cite{Boi3, Ros}.
So we assume that $L$ is a Seifert link which is not a torus link.
By \cite{Bur2}, we see that $L$ is the union of a torus knot of type $(2,b)$ and its {\it core} of index 2, in which case it is easy to see that $b(L)=3$.

Next we assume that $M$ is not a Seifert fibered space. 
Let $T=T_1\cup\cdots\cup T_k$ be tori which give the JSJ decomposition of $M$.
As in Section \ref{sec-thm}, we can see that $M$ is a genus-2 manifold and the covering involution $\tau_L$ is a realization of an inversion of $\pi_1(M)$.
Let $\alpha:=(\tau_L)_{\ast}$ be the automorphism of $\pi_1(M)$ and let $g$ and $h$ be a pair of generators for $\pi_1(M)$.
By \cite[Proposition 20]{Boi8}, $\tau_L$ respects the JSJ decomposition of $M$ and the Seifert fibered structures on the JSJ pieces.
Let $Q$ be the oriented circle bundle over the M\"{o}bius band.
We follow the argument in \cite[Section 3]{Boi9}, under the assumption that $M$ is a genus-2 closed manifold.
We first deal with the following case.

\subsection{The JSJ decomposition has a separating torus and no piece homeomorphic to $Q$}

Let $T_1$ be the separating torus by changing order if necessary, and let $M_A$ and $M_B$ be the two submanifold of $M$ divided by $T_1$.
By the argument in \cite{Boi9}, we see that $M_A$ is a Seifert fibered space, $g$ is a root of a fiber of $M_A$ and $g^n\in\pi_1(T_1)$. 
Moreover, one of the following holds.
\begin{itemize}
\item[(i)] $M_A$ is a Seifert fibered space over a disk with two exceptional fibers and $M_B$ is the exterior of a 1-bridge knot in a lens space,
\item[(ii)] $M_A$ is a Seifert fibered space over a disk with two exceptional fibers and $M_B$ is the exterior of a non-hyperbolic 2-bridge knot in $S^3$,
\item[(iii)] $M_A$ is a Seifert fibered space over a disk with two exceptional fibers and $M_B$ is decomposed by $T_2$ into two pieces $M_B^{(1)}$ and $M_B^{(2)}$, where $M_B^{(1)}$ is the exterior of a 2-component non-hyperbolic 2-bridge link in $S^3$ and $M_B^{(2)}$ is a Seifert fibered space over a disk with two exceptional fibers,
\item[(iv)] $M_A$ is a Seifert fibered space over a M\"{o}bius band with one or two exceptional fibers and $M_B$ is the exterior of a non-hyperbolic 2-bridge knot in $S^3$,
\item[(v)] $M_A$ is a Seifert fibered space over a disk with three exceptional fibers and $M_B$ is the exterior of a non-hyperbolic 2-bridge knot in $S^3$.
\end{itemize}
Here, the boundaries of $M_A$ and $M_B$ are glued so that the fiber of $M_A$ is identified with the meridian of $M_B$.

First assume that (i) is satisfied. 
Since $\alpha(g^n)=g^{-n}$, we see that $\tau_L|_{T_1}$ is hyper-elliptic. 
Note that $\tau_L|_{T_1}$ extends to $M_B$ in a unique way and the quotient of $M_B$ by $\tau_L|_{M_B}$ gives a tangle in Figure \ref{fig-case1-i} (2) (see \cite[Lemma 9]{Jan2}).
%
%%%%%%%%%%%%%
\begin{figure}[btp]
\begin{center}
\includegraphics*[width=7cm]{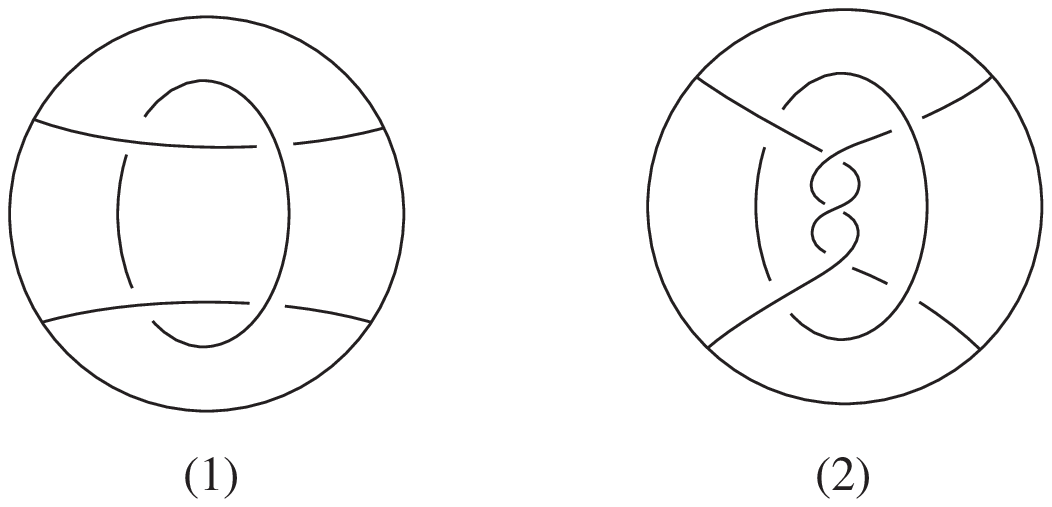}
\end{center}
\caption{}
\label{fig-case1-i}
\end{figure}
%%%%%%%%%%%%%
%
Since we assume that $L$ is not an arborescent link, we see that $\tau_L$ exchanges the two exceptional fibers of $M_A$.
(This implies that the two exceptional fibers of $M_A$ have the same index.)
Then the quotient of $M_A$ by $\tau_L|_{M_A}$ is obtained from the tangle in Figure \ref{fig-case1-i} (1) by applying Dehn surgery along the loop component in the tangle, where the surgery slope is the reciprocal of the index of the exceptional fibers of $M_A$.
Hence the quotient of $M$ by $\tau_L$ is a nontrivial lens space, a contradiction.

Assume that (ii) is satisfied. 
Note that $M_B$ is a Seifert fibered space over a disk with two exceptional fibers of indices $1/2$ and $-n/(2n+1)$.
Thus the involution on $M_B$ which is hyper-elliptic on the boundary is unique (see \cite[Lemma 4 (1)]{Jan2} for example).
By an argument similar to that for the previous case, we can lead to a contradiction.

Assume that (iii) is satisfied. 
Then we see that either $\tau_L(T_i)=T_i$ and $\tau_L|_{T_i}$ is hyper-elliptic $(i=1,2)$ or $\tau_L(T_1)=T_2$.
In the former case, we can use arguments similar to those in the previous cases to lead to a contradiction.
In the latter case, $M_A$ and $M_B^{(2)}$ are homeomorphic and $\tau_L$ interchanges the two pieces.
Denote by $N$ the quotient of $M_B^{(1)}$ by $\tau_L|_{M_B^{(1)}}$, which is a solid torus, and denote by $F$ the image of the fixed point set.
Then the exterior of $F$ in $N$ is homeomorphic to the exterior of a torus link of type $(2,2m)$.
The quotient of $M$ by $\tau_L$, which is supposed to be $S^3$, is obtained by gluing $M_A$ and a solid torus, which implies that $M_A$ is homeomorphic to the exterior of a torus knot (see \cite{Bur2}).
Thus $L$ is a nontrivial cable knot of a torus knot.
By Corollary \ref{cor-satellite}, we have $w(L)\geq 4$, a contradiction.

Assume that (iv) is satisfied.
By arguments similar to those for the previous cases, we can see that $\tau_L|_{M_A}$ and $\tau_L|_{M_B}$ are equivalent to the involutions illustrated in Figure \ref{fig-case1-iv}. 
%
%%%%%%%%%%%%%
\begin{figure}[btp]
\begin{center}
\includegraphics*[width=8cm]{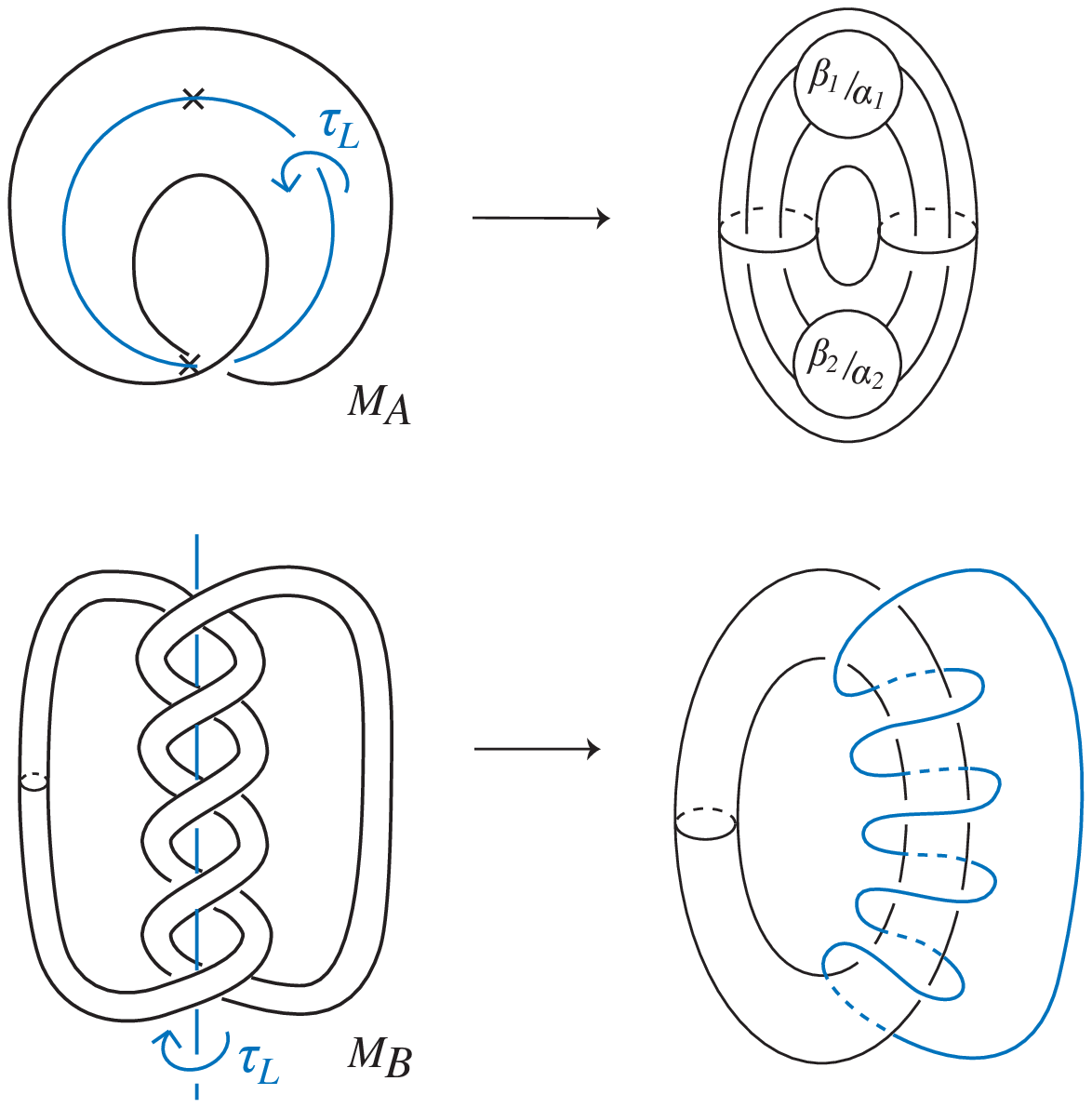}
\end{center}
\caption{}
\label{fig-case1-iv}
\end{figure}
%%%%%%%%%%%%%
%
Hence, the quotient of $M_A$ gives a 2-bridge link in a solid torus and the quotient of $M_B$ gives a component of a torus link of type $(2,2m)$ with the regular neighborhood of the other component removed.
Then we obtain the link in Figure \ref{fig-l4} (cf. \cite{Jan3}), which is a 3-bridge link.
%
%%%%%%%%%%%%%
\begin{figure}[btp]
\begin{center}
\includegraphics*[width=3cm]{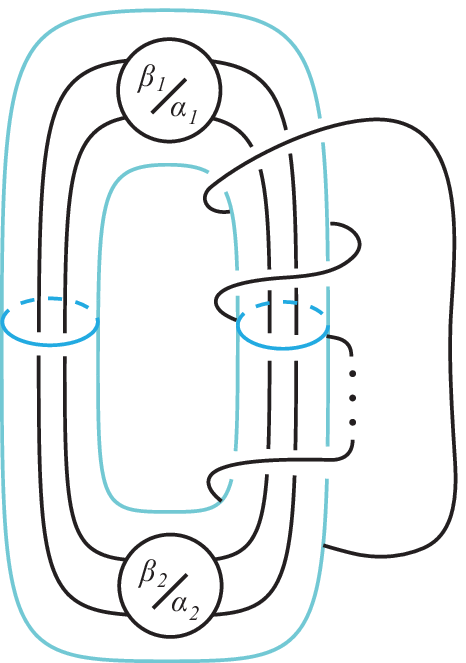}
\end{center}
\caption{}
\label{fig-l4}
\end{figure}
%%%%%%%%%%%%%
%

Assume that (v) is satisfied.
We can lead to a contradiction by arguments similar to those for the previous cases.

\subsection{The JSJ decomposition has a non-separating torus}

Since the genus of $M$ is 2, $M$ consists of one or two Seifert pieces.

We first deal with the case when $M$ consists of one Seifert piece.
By an argument of \cite{Boi9}, we have the following two cases.
\begin{itemize}
\item[(i)] The torus $T$ cuts $M$ into the exterior of a 2-component non-hyperbolic 2-bridge link, and $g$ and $hgh^{-1}$ are the meridians,
\item[(ii)] The torus $T$ cuts $M$ into a Seifert fibered space over an annulus with two exceptional fibers, whose boundary components are glued so that the fibers are identified.
\end{itemize}
When (ii) holds, $M$ is a Seifert fibered space, a contradiction.
Hence assume that (i) holds.
Note that the closure of $M\setminus T$ is a Seifert fibered space, say $M'$, over an annulus with one exceptional fiber.
Since we assume that $M$ is not a Seifert fibered space, the fibers on the two boundary components of $M'$ do not identified.
Since $g$ is a meridian of the 2-bridge link, we can see that $\tau_L|_T$ is hyper-elliptic.
Then the quotient of $M'$ by $\tau_L|_{M'}$ gives a (3,1)-manifold pair in Figure \ref{fig-case2-i}.
%
%%%%%%%%%%%%%
\begin{figure}[btp]
\begin{center}
\includegraphics*[width=4cm]{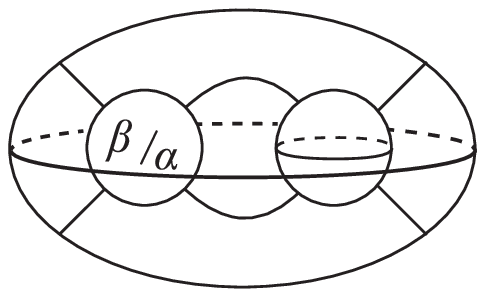}
\end{center}
\caption{}
\label{fig-case2-i}
\end{figure}
%%%%%%%%%%%%%
%
The quotient of $M$ by $\tau_L$ is obtained from $S^3\setminus (B_1\cup B_2)$, where $B_1$ and $B_2$ are open 3-balls, by gluing the two 2-spheres $\partial B_1$ and $\partial B_2$, and hence the quotient of $M$ cannot be homeomorphic to $S^3$, a contradiction.

Next we deal with the case when $M$ consists of two Seifert pieces $M_A$ and $M_B$.
By \cite{Kob}, $M_A$ is a Seifert fibered space over an annulus with one or two exceptional fibers and $M_B$ is the exterior of a 2-component non-hyperbolic 2-bridge link.
By arguments similar to those for previous cases, we can see that $L$ is equivalent to a link in Figure \ref{fig-l4} (cf. \cite{Jan3}), which is a 3-bridge link.

\subsection{There exists a piece homeomorphic to $Q$}

By \cite{Kob}, we have the following cases.
\begin{itemize}
\item[(i)] $M$ consists of two JSJ pieces homeomorphic to $Q$,
\item[(ii)] $M$ consists of two JSJ pieces one of which is homeomorphic to $Q$, and the other is either a Seifert fibered space over a disk with two exceptional fibers or a Seifert fibered space over a M\"{o}bius band with one exceptional fiber,
\item[(iii)] $M$ consists of three JSJ pieces one of which is homeomorphic to $Q$, the second piece is the exterior of a 2-component non-hyperbolic 2-bridge link and the third piece is a Seifert fibered space over a disk with two exceptional fibers.
\end{itemize}

Assume that (i) is satisfied.
By \cite[Lemma 17]{Boi9}, the regular fibers of the two pieces, considered as a Seifert fibered space over a disk with two exceptional fibers, intersect in one point, and $g^2$ is a fiber of one piece.
Then we see that $\tau_L|_T$ is hyper-elliptic, and we can lead to a contradiction by using arguments similar to those in the previous cases.

Assume that (ii) is satisfied.
By an argument in \cite[Proof of Lemma 18]{Boi9}, we can see that $\tau_L|_T$ is hyper-elliptic, and we can lead to a contradiction by using arguments similar to those in the previous cases.

Assume that (iii) is satisfied.
Similarly, we can see that either $\tau_L(T_i)=T_i$ and $\tau_L|_{T_i}$ is hyper-elliptic $(i=1,2)$ or $\tau_L(T_1)=T_2$.
In the former case, we can lead to a contradiction by using arguments similar to those in the previous cases.
In the latter case, we can see that the quotient of $M$ by $\tau_L$ is the union of $Q$ and a solid torus, which cannot be homeomorphic to $S^3$, a contradiction.

This completes the proof of Theorem \ref{thm-main}.

\begin{proof}[Proof of Corollary \ref{cor-inversion}]
Let $M$ be a closed orientable graph manifold which admits an inversion, i.e., $\pi_1(M)$ is generated by two elements $g$ and $h$ and there exists an automorphism $\alpha$ of $\pi_1(M)$ which sends $g$ and $h$ to $g^{-1}$ and $h^{-1}$, respectively.
If $M$ is a Seifert fibered space, then $\alpha$ is hyper-elliptic by \cite[Theorem 5]{Boi8}.
If $M$ is not a Seifert fibered space, then $\alpha$ is hyper-elliptic by Theorem \ref{thm-main} and \cite[Proposition 20 (3)]{Boi8}
\end{proof}

\section{Degree-one maps}\label{sec-degreeone}

In this section we prove  Proposition \ref{prop-degreeone}
 
 \begin{proof}[Proof of Proposition \ref{prop-degreeone}]\ Let $L' \subset S^3$ such that $b(L') = 3$, then $w(L') = 3$ by \cite{Boi6}. Therefore if $L \geq L'$, then $b(L) \geq w(L) \geq w(L') = 3$.

\noindent b) Let $L' \subset S^3$ such that $b(L') = 4$. Assume that $L \geq L'$ and that the 2-fold branched cover $M$ of $L$ is a graph manifold. The degree one map 
$f: E(L) \to E(L')$ between the exteriors of $L$ and $L'$ which preserves the meridians lifts to a degree one map $\tilde{f} : \tilde{E}(L) \to \tilde{E}(L')$ between their 2-fold covers,
which extends to a degree one map $\tilde{f}: M \to M'$ between their 2-fold branched covers $M:=M_2(L)$ and $M'=M_2(L')$. Since $M$ is a graph manifold, its simplicial volume $\Vert M \Vert = 0$. The existence of the 
degree one map $\tilde{f}: M \to M'$ implies that $\Vert M' \Vert  \leq \Vert M \Vert$ and thus $\Vert M' \Vert = 0$. By the orbifold theorem \cite{BP} $M'$ admits a geometric decomposition 
and thus is a connected sum of graph manifolds. Therefore $L'$ is a connected sum of links whose 2-fold branched covers are graph manifolds.

If $L'$ is prime, it follows from Corollary \ref{cor-4bridges} that $w(L') = 4$ and thus $b(L) \geq w(L) \geq w(L') = 4$.

 If $L'$ is not prime, then $L' = L'_1 \sharp L'_2$ with $b(L'_1) = 2 = w(L'_1)$ and $b(L'_2) = 3= w(L'_2)$ by \cite{Boi6}. The exterior $E(L')$ is obtained by gluing a copy of $E(L'_1)$ and of 
 $E(L'_2)$ along two boundary components of $S^1 \times P$, where $P$ is a pant. Thus one can define two epimorphisms $\phi_1: \pi_1 (E(L')) \to \pi_1 (E(L'_1))$ and 
 $\phi_2: \pi_1 (E(L')) \to \pi_1 (E(L'_2))$ such that the restriction of $\phi_1$ to $\pi_1 (E(L'_1))$ and the restriction of $\phi_2$ to $\pi_1 (E(L'_2))$ are the identity and such that $\phi_1(\pi_1 (E(L'_1))) = \mathbb{Z}$ and 
 $\phi_2(\pi_1 (E(L'_2))) = \mathbb{Z}$. These epimorphisms imply that $w(L') = w(L'_1) + w(L'_2) -1 = 4$, and thus $b(L) \geq w(L) \geq w(L') = 4$.
 \end{proof}

%==============================================================


\begin{thebibliography}{99}


%\bibitem{Ad} C. Adams, \emph{Hyperbolic 3-manifolds with two
%generators}, Comm. Anal. Geom. \textbf{4}, 1996, 181-206.

%\bibitem{BF} M.~Bestvina and M. Feighn,
%\emph{Bounding the complexity of simplicial group actions on trees}
%Invent. math. \textbf{103} (1991), 449--469.

%\bibitem{Boi1} M. Boileau, D. J. Collins and H. Zieschang,
%{\it Genus 2 Heegaard decompositions of small Seifert 3-manifolds},
%Ann. Inst. Fourier (Grenoble) {\bf 41} (1991), 1005--1024.

\bibitem{BMP} M. Boileau, S. Maillot and J. Porti, \emph{Three-dimensional
orbifolds and their geometric structures}, Panoramas et Synth\`eses,
\textbf{15}, Soci\'et\'e Math\'ematique de France, Paris, 2003.

\bibitem{BP} M. Boileau and J. Porti \emph{Geometrization of
3-orbifolds of cyclic type}, Ast\'eris\-que Monograph, \textbf{272}, 2001.

\bibitem{Boi9} M. Boileau and R. Weidmann,
{\it The structure of 3-manifolds with two-generated fundamental group},
Topology {\bf 44} (2005), no.2, 283--320.

\bibitem{Boi8} M. Boileau and R. Weidmann,
{\it On invertible generating pairs of fundamental groups of graph manifolds},
Geom. Topol. Monogr. {\bf 14} (2008), 105--128.

\bibitem{Boi10} M. Boileau and H. Zieschang,
{\it Genre de Heegaard d'une vari\'{e}t\'{e} de dimension 3 et g\'{e}n\'{e}rateurs
de son groupe fondamental},
C. R. Acad. Sci. Paris S\'{e}r. I Math. {\bf 296} (1983), 925--928.

\bibitem{Boi11} M. Boileau and H. Zieschang,
{\it Heegaard genus of closed orientable Seifert $3$-manifolds},
Invent. Math. {\bf 76} (1984), 455--468.

\bibitem{Boi3} M. Boileau and H. Zieschang,
{\it Nombre de ponts et g\'{e}n\'{e}rateurs m\'{e}ridiens des entrelacs de Montesinos},
Comment. Math. Helvetici {\bf 60} (1985), 270--279.

\bibitem{Boi6} M. Boileau and B. Zimmermann,
{\it The $\pi$-orbifold group of a link},
Math. Z. {\bf 200} (1989), 187--208.

\bibitem{Bon}
F. Bonahon and L. Siebenmann,
{\it New geometric splittings of classical knots, and 
the classification and symmetries of arborescent knots},
available at http://www-bcf.usc.edu/~fbonahon.

\bibitem{Bur2}
G. Burde and K. Murasugi,
{\it Links and Seifert fiber spaces}, 
Duke Math. J. {\bf 37} (1970), 89--93.

%\bibitem{BZ} G.~Burde and H.~Zieschang,
%\emph{Knots},
%de Gruyter Studies in Mathematics {\bf 5}, de Gruyter, 1985.

\bibitem{Co} C. R. Cornwell,
{\it Knot contact homology and representations of knot groups}, Journal of Topology \textbf{7} (2014) 1221–-1242.

\bibitem{CH} C. R. Cornwell and  D. R. Hemminger,
{\it Augmentation Rank of Satellites with Braid Pattern},  arXiv:1408.4110

%\bibitem{Dun}
%W. D. Dunbar,
%{\it Geometric orbifolds}, 
%Rev. Mat. Univ. Complut. Madrid {\bf 1} (1988), 67--99.

%\bibitem{GAR} F. Gonz{\'a}lez-Acu{\~n}a and A. Ramirez, {\em  Normal forms of generating pairs for Fuchsian and related groups}, Math. Ann. {\bf 322} (2002), pp.  207--227.

%\bibitem{GW} F. Gonz\`alez-Acu\~{n}a and W. Whitten \emph{Imbeddings of three-manifold groups}, Mem. Amer. Math. Soc. \textbf{99}, 1992.  

\bibitem{Fut}
D. Futer and F. Gu\'{e}ritaud,
{\it Angled decompositions of arborescent link complements},
Proc. Lond. Math. Soc. (3) {\bf 98} (2009), no. 2, 325--364.


%\bibitem{JS1} W.H. Jaco and P.B. Shalen, {\em Peripheral structure of
%3-manifolds}, Invent. Math. \textbf{38} (1976), 55-87.

%\bibitem{JS2} W.H. Jaco and P.B.Shalen, {\em Seifert fibered spaces
%in 3-manifolds},
%Memoirs Amer. Math. Soc. \textbf{21} No. 220, 1979.


\bibitem{Jan2} Y. Jang,
{\it Classification of 3-bridge arborescent links},
Hiroshima Math. J. {\bf 41} (2011), 89--136.

\bibitem{Jan3} Y. Jang,
{\it Characterization of 3-bridge links with infinitely many 3-bridge spheres},
Topology Appl. {\bf 159} (2012), no. 4, 1132--1145.

%\bibitem{KMW} I.~Kapovich, A. Myasnikov and R.~Weidmann,
%\emph{Foldings, graphs of groups and the membership problem.} Internat. J. Algebra Comput. \textbf{15} (2005), no. 1, 95--128.

%\bibitem{KW} I.Kapovich and R.Weidmann, {\em Two-generated groups
%acting on trees},
%Arch. Math. \textbf{73} (1999), 172-181.


\bibitem{Kir}
R. Kirby,
{\it Problems in low dimensional topology}, 
Proc. Symposia in Pure Math., {\bf 32} (1978), 272--312.

\bibitem{Kob} T. Kobayashi,
{\it Structures of the Haken manifolds with Heegaard splittings of genus two},
Osaka J. Math. {\bf 21} (1984), 437--455.

\bibitem{Li}
T. Li,
{\it Rank and genus of 3-manifolds}, 
J. Amer. Math. Soc. {\bf 26} (2013), 777-829.

\bibitem{LM} M.~Lustig and Y,~Moriah, {\it Generalized Montesinos knots, tunnels and $\mathcal N$-torsion}, Math. Ann. {\bf 295} (1992), 167-189.

%\bibitem{Pap} C. D. Papakyriakopoulos,
%{\it On Dehn's lemma and the asphericity of knots},
%Proc. Nat. Acad. Sci. (U.S.A.) {\bf 43} (1957), 169--172.
%Ann. of Math. (2) {\bf 66} (1957), 1--26.

%\bibitem{Rol} D. Rolfsen,
%{\it Knots and links},
%Mathematics Lecture Series, No. 7. Publish or Perish, Inc., Berkeley, Calif., 1976.

\bibitem{Ros} M. Rost and H. Zieschang,
{\it Meridional generators and plat presentations of torus links},
J. London Math. Soc. (2) {\bf 35} (1987), no. 3, 551--562.

%\bibitem{Sch} H. Schubert,
%{\it Knoten mit zwei Br\"{u}cken},
%Math. Z. {\bf 7} (1956), 133--170.

\bibitem{Sch2} H. Schubert,
{\it \"{U}ber eine numerische Knoteninvariante},
Math. Z. {\bf 61} (1954), 245--288.

%\bibitem{Sch3} J. Schultens,
%{\it Additivity of bridge numbers of knots},
%Math. Proc. Camb. Phil. Soc. {\bf 135} (2003), 539--544.

\bibitem{SW} J. Schultens and R.~Weidmann,
\emph{On the geometric and the algebraic rank of graph manifolds},
Pacific J. of Math. {\bf 231} (2007), 481--510.


\bibitem{Tol} J. L. Tollefson,
{\it Involutions of sufficiently large 3-manifolds},
Topology, {\bf 20} (1981), 323--352.



\bibitem{W} R.~Weidmann, 
\emph{The Nielsen method for groups acting on trees}, 
Proc. Lond. Math. Soc. (3) \textbf{85} (2002), 93--118.



\bibitem{W1} R.~Weidmann, 
\emph{Some 3-manifolds with 2-generated fundamental group},
Arch. Math. \textbf{81} (2003), 589--595.

\end{thebibliography}
\end{document}